\DeclareMathOperator{\vol}{vol}
\theoremstyle{plain}
\newtheorem{theorem}[equation]{Theorem}
\newtheorem{lemma}[equation]{Lemma}
\newtheorem{proposition}[equation]{Proposition}
\newtheorem{corollary}[equation]{Corollary}
\theoremstyle{definition}
\newtheorem{example}[equation]{Example}
\newtheorem{question}[equation]{Question}
\newtheorem{problem}[equation]{Problem}
\newtheorem{definition}[equation]{Definition}
\theoremstyle{remark}
\newtheorem{remark}[equation]{Remark}
\numberwithin{equation}{section}
\newtheorem*{acknowledgements}{Acknowledgements}
\def\Chi{\raisebox{2pt}{$\chi$}}
\def\Z{\mathbb Z}
\def\C{\mathbb C}
\def\R{\mathbb{R}}
\def\P{\mathcal{P}}
\def\z{\mathbf{z}}
\def\T{\mathbb T}
\def\WG{\widehat{G}}
\def\WGR{\widehat{G}}
\def\WGL{\widehat{G}_\mathcal{L}}
\def\WG#1{\widehat{G}_{\mathcal{L}_{#1}}}
\def\e{\varepsilon}
\def\v{\mathbf{v}}
\def\w{\mathbf{w}}
\def\u{\mathbf{u}}
\def\y{\mathbf{y}}
\def\t{\mathbf{t}}
\def\Rv{R^{\v}}
\newcommand{\LL}{\mathcal{L}}
\newcommand{\TT}{\mathcal{T}}
\title{Tiling the integer lattice with translated sublattices}
\subjclass[2010]{primary: 52C10, secondary: 52C15, 52C17, 32A27}
\keywords{integer lattice, sublattice, tiling, generating function, Pontryagin dual group, disjoint covering system}
\author{Maciej Borodzik}
\thanks{The first author was supported by  Polish OPUS grant No 2012/05/B/ST1/03195}
\address{Institute of Mathematics, University of Warsaw, ul. Banacha 2,
02-097 Warsaw, Poland}
\email{mcboro@mimuw.edu.pl}
\author{Danny Nguyen}
\address{Department of Mathematics,  
University of California, Los Angeles, 
520 Portola Plaza,
Los Angeles, CA 90095}
\email{ldnguyen@math.ucla.edu}
\author{Sinai Robins}
\thanks{The second and third authors were supported in part by the Singapore
  MOE Tier 2 research grant MOE2011-T2-1-090}
\address{Mathematics Department,  
Brown University,
Box 1917, 
151 Thayer Street,
Providence, RI 02912} \email{sinai.robins@gmail.com}
\begin{document}
\begin{abstract}
  When $\Z^d$ is represented as a finite disjoint union of translated integer
  sublattices, the translated sublattices must possess some special
  properties. Such a representation is called a \emph{lattice tiling}. We develop a
  theoretical framework, based on multiple residues and dual groups, to provide
  a set of necessary and sufficient conditions for such a lattice tiling to exist.  We
  also investigate the question of when a lattice tiling must possess at least
  two translated sublattices which are translates of one another.
\end{abstract} 
\maketitle

\section{Introduction}
\subsection{Overview}
A \emph{sublattice} is a full-rank subgroup of $\Z^d$. Suppose we decompose
$\Z^d$ into a finite, disjoint union of integer translates of sublattices. We call such a 
decomposition of the integer lattice $\Z^d$ a \emph{lattice tiling}.  Given a lattice tiling, what can be said about the structure of the translated sublattices?  

A lattice tiling has the \emph{translational property} if at least two of its
translated sublattices are different cosets of
the same sublattice. To motivate the results of this paper, we focus on the following three questions:

\begin{question}\label{question1}
What are some natural and general \emph{necessary} and \emph{sufficient} conditions for the existence of a lattice tiling?  
\end{question}

\begin{question}\label{question2}
In any general dimension $d$, are there some nice sufficient conditions for a lattice tiling to have the translational property?
\end{question}

\begin{question}\label{question3}
For  $d=2$, is there a lattice tiling which does not have the translational property?
\end{question}

In the process of trying to answer these questions, we develop some analytic tools which may be of independent interest. These tools involve generating functions associated to sublattices of $\Z^d$, residue calculus of holomorphic functions of several variables  \cite{Tsikh}, and some elementary considerations.  

\subsection{Terminologies and notations}
Before stating the main results we layout some concise definitions of notions that will be used throughout the paper.

\smallskip
The index of a sublattice $\LL$ in $\Z^d$ is called the \emph{determinant}
of $\LL$, denoted by $\det \LL$. 
Throughout the article,  we write 
$d-$dimensional vectors in bold, to distinguish them from scalars. 
Thus any vector $\v \in \Z^d$ has coordinates $(v_1,\dots,v_d)$. Furthermore, we write $\v\ge 0$
if $v_1,\dots,v_d\ge 0$. Define $\mathbf{0}:= (0,\ldots,0)$ and
$\mathbf{1}:=(1,\ldots,1)$.

\smallskip
For any sublattice $\LL \subseteq \Z^d$ and integer vector $\v \in \Z^d$, we call the discrete set of vectors 
\[\v+\LL := \{ \v + \w \mid \w \in \LL\}\] 
a \emph{translate} of $\LL$.  The vector $\v$ will be referred to as a \emph{translate vector}.  Thus, a more formal description of a lattice tiling is the existence of a collection of  ($d-$dimensional) sublattices $\LL_1,\dots,\LL_n$ and translate vectors 
$\v_1,\dots,\v_n \in \Z^d$ such that 
\[
\bigcup_{j=1}^n \{  \v_j+\LL_j \}=\Z^d,
\]
and such that $\{ \v_i+  \LL_i  \}  \cap \{ \v_j+\LL_j \}= \emptyset$ for all $i \not= j$.   
In other words, for any $\w\in\Z^d$ there exists a unique $j\in\{1,\dots,n\}$ such that $\w-\v_j\in\LL_j$.

\smallskip
Let $\T^d=\{(z_1,\dots,z_d)\in\C^d\colon |z_1|=\dots=|z_d|=1\}$.
For any $\z=(z_1,\dots, z_d) \in \T^d$ and $\v = (v_1,\dots,v_d) \in \Z^d$, we define
\[
\Chi_\z (\v) :=z_1^{v_1}\dots z_d^{v_d} \in \T.
\]
Such a homomorphism $\Chi_\z : \Z^d \to \T$
is refered to as a \emph{character}.  
We also say that a character $\Chi_\z$ has \emph{finite order} if each $z_j$ is
additionally assumed to be a root of unity, a condition tantamount to saying
that the image of $\Chi_{\z}$ is finite. 
\smallskip

For a sublattice $\LL\subseteq\Z^d$, we call a  complex point $\z\in\T^d$ a \emph{dual point} of $\LL$ if $\Chi_\z(\v)=1$ for all $\v \in \LL$. Since a point $\z\in \T^d$ is a dual point of $\LL$  if and only if 
the homomorphism $\Chi_{\z}$ restricted to $\LL$ is trivial, the dual points can be regarded as characters on the finite abelian group 
\[
G_\LL:=\Z^d/\LL,
\]
which we call the \emph{group of the sublattice}. If $G_\LL$ is cyclic then $\LL$
is called a \emph{cyclic sublattice}.
It is a standard fact that the dual points form a group, known as the Pontryagin dual to $G_\LL$, 
and it is particularly useful  that this group is isomorphic to $G_\LL$. We
denote the dual group by $\WGL$. We clearly have
\[|\WGL|=|G_\LL|=\det \LL.\]

\subsection{Statement of results}

Many necessary conditions can be deduced about the structure of a lattice tiling by elementary
arguments. One notable condition among such various results that we will prove in Subsection~\ref{simpleconditions} is: 

\begin{theorem}\label{thm:prime}
In a lattice tiling, if there is a prime $p$ such that $p^k$ divides the determinant of one of the sublattice translates, then $p^k$ divides the determinant of another sublattice translate. 
\end{theorem}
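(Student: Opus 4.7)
The plan is to derive a character-sum identity from the tiling and then extract a $p$-adic contradiction. First I would pass to a finite quotient: pick $N$ with $N\Z^d\subseteq\LL_j$ for every $j$. Taking $N=\prod_j\det\LL_j$ works, since $(\det\LL_j)\cdot\Z^d\subseteq\LL_j$ always. The tiling descends to a partition of the finite group $G=\Z^d/N\Z^d$ into cosets of $\LL_j/N\Z^d$, and applying character orthogonality term by term yields, for every nontrivial character $\chi$ of $G$ (equivalently, for every nontrivial finite-order character of $\Z^d$ that is trivial on $N\Z^d$), the identity
\[
\sum_{j\,:\,\chi\in\WG{j}}\frac{\chi(\v_j)}{\det\LL_j}\;=\;0, \qquad (\star)
\]
the sum running over those $j$ for which $\chi$ is trivial on $\LL_j$, equivalently $\chi \in \WG{j}$.

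After relabelling, assume $p^k\mid\det\LL_1$ and suppose for contradiction that $p^k\nmid\det\LL_j$ for every $j\neq 1$, so $v_p(\det\LL_j)\leq k-1$ for $j\neq 1$. Since $p\mid|\WG{1}|=\det\LL_1$, Cauchy's theorem produces a character $\chi\in\WG{1}$ of order exactly $p$. It is important that one does \emph{not} need (and in general cannot produce) an order-$p^k$ character, as the $p$-Sylow of $\Z^d/\LL_1$ may well be noncyclic; the whole asymmetry between $\LL_1$ and the other lattices will be captured by a $p$-adic valuation count alone. Let $S=\{j:\chi\in\WG{j}\}$. Then $1\in S$; the order $p$ of $\chi$ divides $|\WG{j}|=\det\LL_j$ for every $j\in S$; and the $p$-adic valuation $v_p(\det\LL_j)$ is \emph{uniquely} maximized on $S$ at $j=1$.

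To exploit this, multiply $(\star)$ by $D=\lcm_{j\in S}\det\LL_j$ to obtain, inside $\Z[\zeta_p]$,
\[
\chi(\v_1)\cdot\frac{D}{\det\LL_1}\;+\;\sum_{\substack{j\in S\\ j\neq 1}}\chi(\v_j)\cdot\frac{D}{\det\LL_j}\;=\;0,
\]
where $D/\det\LL_1$ is a positive integer coprime to $p$, while $p\mid D/\det\LL_j$ for every other $j\in S$. Now reduce modulo the ramified prime $\mathfrak{p}=(1-\zeta_p)$ of $\Z[\zeta_p]$, for which $\mathfrak{p}^{p-1}=(p)$; since every $p$-th root of unity is $\equiv 1\pmod{\mathfrak{p}}$, each $j\neq 1$ summand vanishes modulo $\mathfrak{p}$. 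The identity then forces $D/\det\LL_1\equiv 0\pmod{\mathfrak{p}}$, contradicting the fact that $D/\det\LL_1$ is a $p$-unit.

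The main technical step is establishing $(\star)$ cleanly, but this is standard Fourier analysis on a finite abelian group and the paper's dual-group formalism is tailored to deliver it in exactly this form. Everything else is a one-line $p$-adic valuation argument, which is what makes the approach robust to pathologies of the $p$-Sylow structure of $\Z^d/\LL_1$.
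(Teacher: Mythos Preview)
Your proof is correct and follows essentially the same route as the paper: both invoke the character identity (the paper's Proposition~\ref{prop:mainequality}, your $(\star)$), choose a $p$-power-order dual point of $\LL_1$, clear denominators by the lcm of the determinants in play, and obtain a contradiction modulo $p$ from the hypothesis that $v_p(\det\LL_1)$ is uniquely maximal. Your choice of an order-$p$ character (rather than order $p^a$) and reduction modulo the ramified prime $\mathfrak{p}=(1-\zeta_p)$ in $\Z[\zeta_p]$ are mild streamlinings of the paper's cyclotomic-polynomial divisibility argument borrowed from Lemma~\ref{lem:str}, but the substance is identical.
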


\noindent
To give an answer to Question~\ref{question2}, we have the following result, stated in terms of cyclic sublattices.

\begin{theorem}\label{cycliccondition}
If we have a lattice tiling in which the sublattice translate with largest determinant is cyclic, then our lattice tiling has the translational property. 
\end{theorem}

This will be proved after Corollary~\ref{cor:comeinpairs}. In dimension 2 we
can prove a stronger condition, made precise in
Theorem~\ref{thm:main-two}. Finally, the following necessary and sufficient
conditions answer question \eqref{question1}, which we call the character
formulas. This result came with the help of generating functions.

\begin{theorem}\label{mainequality}
We have a lattice tiling with $\v_1+\LL_1,\dots,\v_n+\LL_n$ if and only if for any character of finite order
$\Chi_\z$, the following holds
\begin{equation}\label{eq:mainresintro}
\sum_{    j \colon \z\in\WG{j}    } 
\frac{\Chi_\z (\v_j)}{\det \LL_j}=\begin{cases} 1,&\textrm{if } \z=(1,\ldots,1)\\ 0, & \textrm{otherwise.}\end{cases}
\end{equation}
\end{theorem}
The `only if' part of Theorem~\ref{mainequality} is stated and proved in  Proposition~\ref{prop:mainequality}. The `if' part is 
Theorem~\ref{thm:sufficient}. The proof will also illustrate that
~\eqref{eq:mainresintro} is indeed a finite set of conditions.


\smallskip
The layout of the following sections are as follows. Section~\ref{sec:two}
displays some basic properties of lattice tilings and also the definition of
our generating functions. Section~\ref{sec:three} examines dual points and
characters in connection with these generating
functions. Section~\ref{sec:rescalc} brings in the main analytic ingredients
to prove Theorem~\ref{mainequality}. Although Question~\ref{question3}
remains open, Section~\ref{sec:translational} will illustrate the restrictive
nature of 2-dimensional lattice tilings. Lastly, we will pose two
open questions in Section~\ref{sec:open}.

\subsection{Background review}\label{sec:background}
Note that equation \eqref{eq:mainresintro} is a basic relation between roots
of unity, with rational coefficients.  Indeed, $1$-dimensional lattice
tilings have been extensively studied using vanishing sums of roots of unity
over the rationals.  In dimension $1$, a lattice tiling is also known in the
literature as a Disjoint Covering System (DCS).  Paul Erd{\H{o}}s initiated
the study of covering systems in general (which means that the arithmetic
progressions may not necessarily be disjoint, see \cite{Erd}), and Erd{\H{o}}s
credits the beautiful proof of  the translational property, in dimension $1$,
to an unpublished  paper  by Mirsky and Newman, and independently to an
unpublished paper of Davenport and  Rado.  Many interesting papers have since
been written about the $1$-dimensional case of lattice tilings, and for more
background, including some fascinating results on vanishing sums of roots of
unity, the reader may refer to   \cite{DCS2},  \cite{DCS3}, \cite{BS1},
\cite{Redei}, \cite{Schinzel}, \cite{Sun1}, \cite{Znam}.  

There is a related question in the context of a general group $G$.  Suppose that $G$ is  partitioned into some cosets of some  of its subgroups. 
Then the conjecture, known as the Herzog-Sch\"{o}nheim conjecture, says that
at least two of the cosets must have the same index. The Herzog-Sch\"{o}nheim conjecture was solved for finite nilpotent groups in
1986, in the paper \cite{Herzog-SchonheimConj}. Since
then stronger results have been proved (see \cite{Sun2}), but the general case
still remains open.    

Since a lattice tiling may also be thought of as a covering of the group $\Z^d$ by a finite disjoint union of cosets of subgroups of 
$\Z^d$, it then follows from  \cite{Herzog-SchonheimConj}, in our abelian
group setting, that any lattice tiling must contain at least two sublattice
translates of the same determinant.  In other words, there are two sublattice
translates that must have the same volume for their fundamental domain.  But
almost any other question about these fundamental domains remains open.

In the recent paper  \cite{Tiling1}, the translational property for higher
dimensional lattice tilings was considered from a discrete Fourier
perspective.  The translational property for dimension $d>1$ was apparently
first considered in another unpublished manuscript, this time an MIT Master's
thesis by A. Schwartz \cite{Schwartz}, using purely combinatorial methods. We
note that in general, the translational property does not hold for dimension
$d > 2$ (see Example~\ref{ex:nonstandard}).

Finally, we mention that Paul Erd{\H{o}}s \cite{Erd2} himself has been quoted as saying in 1995 that ``Perhaps my favorite problem of all concerns covering systems''.

\begin{acknowledgements}
We thank Professor Krzysztof Przeslawski for bringing the Master's thesis \cite{Schwartz} to our attention. 
The project was started when M.B. and S.R. were visiting Renyi Institute in Budapest. We would like to thank the Renyi Institute for their hospitality.  The authors S.R. and D.N. would like to thank the University of Warsaw for their hospitality, where this project was continued.
\end{acknowledgements}

\newpage
\section{Sublattices, generating functions, and characters}\label{sec:two}

\subsection{The tiling condition}\label{simpleconditions}
\begin{definition}
A tiling $\v_1+\LL_1,\dots,\v_n+\LL_n$ of $\Z^d$ \emph{splits} if $\{1,\dots,n\}$ can be partitioned into $I_1\cup\ldots\cup I_k$ with $k>1$, $|I_j|>0$ for
$j=1,\ldots,k$, $|I_1|>1$, so that for any $j=1,\ldots,k$ the union
\[
\bigcup_{i\in I_j} \{        \v_i+\LL_i   \}
\]
is another sublattice translate. Otherwise a tiling is called \emph{primitive}. This definition simply captures the intuition that one can generate more complex lattice tilings by starting with a simple one, and splitting one of the existing sublattice translates into new ``coarser'' sublattice translates.
\end{definition}

\begin{lemma}\label{lem:prime}
Assume that $\LL\subset \Z^d$ is a sublattice with $\det \LL=p$, for $p$ a prime.  Fix any integer vector  $\v\notin\LL$. 
Then we have $\mathrm{span}\{\LL,\v\} = \Z^d$.
\end{lemma}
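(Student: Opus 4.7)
The plan is to pass to the quotient group and use the fact that any group of prime order is simple as an abelian group.

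First I would observe that the group of the lattice $G_\LL = \Z^d/\LL$ has order $\det \LL = p$, by the very definition of the determinant of $\LL$. Since $p$ is prime, $G_\LL$ is necessarily cyclic of order $p$, and in particular every non-identity element of $G_\LL$ generates the whole group (this is just Lagrange's theorem: any nontrivial subgroup of $G_\LL$ must have order dividing $p$, hence order exactly $p$).

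Next I would translate this back to $\Z^d$. Let $\bar\v \in G_\LL$ denote the image of $\v$ under the quotient map $\Z^d \to G_\LL$. The hypothesis $\v \notin \LL$ says precisely that $\bar\v$ is not the identity of $G_\LL$, so by the previous paragraph $\bar\v$ generates $G_\LL$. Therefore for any $\w \in \Z^d$ there exists $k \in \Z$ with $\bar\w = k\bar\v$ in $G_\LL$, which means $\w - k\v \in \LL$, i.e.\ $\w \in \spn\{\LL,\v\}$. This shows $\spn\{\LL,\v\} \supseteq \Z^d$, and the reverse inclusion is automatic since both $\LL$ and $\v$ lie in $\Z^d$.

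There is no real obstacle here: the whole statement is a one-line consequence of the prime-order case of Lagrange's theorem, once the definitions are unpacked. The only thing worth being explicit about is that the word ``span'' in the conclusion must be interpreted as the $\Z$-span (equivalently, the subgroup generated), rather than the $\R$-span; under the $\Z$-span interpretation the argument above is complete, while under an $\R$-span interpretation the claim would be trivial and would not even require $p$ to be prime.
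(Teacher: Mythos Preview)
Your proof is correct and follows essentially the same idea as the paper's: both reduce to the fact that a group of prime order has no proper nontrivial subgroups. The paper phrases this via index multiplicativity (the intermediate lattice $\TT=\spn\{\LL,\v\}$ strictly contains $\LL$, so $[\Z^d:\TT]$ is a proper divisor of $p$, hence $1$), while you phrase it via Lagrange's theorem in the quotient $\Z^d/\LL$; these are the same argument in slightly different language.
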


\begin{proof}
Let $\TT$ be the sublattice generated by $\LL$ and $\v$. Since $\v\notin\LL$,
we have $\LL\subsetneq \TT \subseteq \Z^d$ and the index of $\TT$ in $\Z^d$ is therefore a proper divisor of $p$. We conclude that $\det\TT=1$ and hence $\TT=\Z^d$.
\end{proof}

\begin{proposition}\label{prop:split}
If we have $\det \LL_k=p$ with $p$ a prime for some $k$ then the tiling either splits or all the sublattices in the tiling are equal to $\LL_k$.
\end{proposition}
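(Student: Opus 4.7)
The plan is to first show that every lattice $\LL_j$ in the tiling must be contained in $\LL_k$, and then partition the indices according to the cosets of $\LL_k$ to conclude. The driving fact is that the index $p=\det\LL_k$ is prime, so the group $G_{\LL_k}=\Z^d/\LL_k$ has no nontrivial proper subgroups.

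First I would argue by contradiction that $\LL_j\subseteq\LL_k$ for every $j$. If instead some $\LL_j\not\subseteq\LL_k$, pick $\w\in\LL_j\setminus\LL_k$; by Lemma~\ref{lem:prime} applied to $\LL_k$ and $\w$ we get $\mathrm{span}\{\LL_k,\w\}=\Z^d$, hence $\LL_j+\LL_k=\Z^d$. Equivalently, the image of $\LL_j$ in $G_{\LL_k}$ is all of $G_{\LL_k}$, so $\LL_j$ meets every coset of $\LL_k$. Consequently the translate $\v_j+\LL_j$ intersects the coset $\v_k+\LL_k$, which contradicts disjointness of the tiling whenever $j\neq k$.

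Once $\LL_j\subseteq\LL_k$ for every $j$, each translate $\v_j+\LL_j$ is entirely contained in a single coset of $\LL_k$. I would then partition $\{1,\dots,n\}$ into classes $I_C$ indexed by the $p\geq 2$ cosets $C$ of $\LL_k$, placing $j\in I_C$ precisely when $\v_j+\LL_j\subseteq C$. Because the tiling covers $\Z^d$ exactly, $\bigcup_{j\in I_C}(\v_j+\LL_j)=C$, which is itself a lattice translate of $\LL_k$. Note that $I_{\v_k+\LL_k}=\{k\}$, since any other translate touching $\v_k+\LL_k$ would violate disjointness.

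To finish: if some class $I_C$ has more than one element, I reorder the partition so that this class is listed first and all conditions in the definition of \emph{splits} are met, yielding the first alternative. Otherwise every class is a singleton; then for each coset $C$ the unique translate $\v_j+\LL_j$ assigned to $C$ must equal $C$ as a set, forcing $\LL_j=\LL_k$, which gives the second alternative. I do not expect a genuine obstacle here — the argument is purely a bookkeeping exercise built on Lemma~\ref{lem:prime} and the fact that $\Z/p\Z$ has no proper nontrivial subgroups, and the character formula from Theorem~\ref{mainequality} is not needed.
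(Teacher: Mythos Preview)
Your proof is correct and follows essentially the same approach as the paper: both use Lemma~\ref{lem:prime} to show every $\LL_j$ is contained in $\LL_k$, then partition the index set by cosets of $\LL_k$ and observe that each coset is either a single translate (forcing $\LL_j=\LL_k$) or contributes a block to a splitting. The only cosmetic differences are that the paper first translates so that $\v_k=\mathbf{0}$ and fixes an explicit isomorphism $\Z^d/\LL_k\to\Z/p\Z$, whereas you work directly with cosets.
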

\begin{proof}
Assume that $\det\LL_1=p$ and translate the tiling if necessaray so that
$\v_1=0$. By this assumption, we have  $\LL_1 \cap (\v_i+\LL_i) = \emptyset$
for all $i>1$. So $\v_i\notin\mathrm{span}\{\LL_1,\LL_i\}$ and hence $\mathrm{span}\{\LL_1,\LL_i\} \subsetneq \Z^d$. From Lemma~\ref{lem:prime}, we conclude that $\LL_i\subseteq\LL_1$ for all $i>1$ and this implies $\v_i+\LL_i$ lies in the translate of $\LL_1$ under $\v_i$. 

Since $\Z^d/\LL_1$ is a group with $p$ elements, it is isomorphic to $\Z/p\Z$.
Let us fix an isomorphism $\Z^d/\LL_1\to\Z/p\Z$ and define $\pi\colon\Z^d\to\Z^d/\LL_1\stackrel{\cong}{\to}\Z/p\Z$ as a composition of
the projection map with this isomorphism.  For any $k\in\{0,\ldots,p-1\}$, we define the subset of indices
\[I_k=\{i\in\{1,\ldots,n\}\colon \pi(\v_i)=k\}.\]
The union
\[ 
   \bigcup_{i\in I_k}   \{   \v_i+\LL_i    \}
\]
is equal to $\pi^{-1}(k)$. Therefore, it is equal to a translate of $\LL_1$. 
If $|I_k|>1$ for somke $k$, we have a split tiling. If for all $k$ we have $|I_k|=1$, then all sublattices are equal
to $\LL_1$.
\end{proof}

\begin{lemma}\label{lem:nocop}
Assume that the sublattices $\LL_1,\dots,\LL_n$ have coprime determinants, i.e. 
\[\gcd(\det \LL_1,\dots,\det \LL_n)=1.\] 
Then the sublattice $\mathcal{L}$ generated by  $\bigcup \LL_i$, over the integers, 
is isomorphic to $\Z^d$.
\end{lemma}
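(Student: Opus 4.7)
The plan is to exploit the multiplicativity of subgroup indices in chains of full-rank sublattices, together with the coprimality hypothesis. Because each $\LL_i$ has full rank in $\Z^d$, the sum $\mathcal{L} := \sum_{i=1}^n \LL_i$ (which is the integer span of $\bigcup_i \LL_i$) is again a full-rank sublattice of $\Z^d$, containing every $\LL_i$. So we obtain the chain $\LL_i \subseteq \mathcal{L} \subseteq \Z^d$ for each $i$.

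The main step is then to note that for any such chain of full-rank lattices, the indices are multiplicative, so $[\Z^d : \mathcal{L}]$ divides $[\Z^d : \LL_i] = \det \LL_i$. Since this holds simultaneously for every $i$, the integer $[\Z^d : \mathcal{L}]$ must divide $\gcd(\det \LL_1, \dots, \det \LL_n)$. By hypothesis, this greatest common divisor equals $1$, which forces $[\Z^d : \mathcal{L}] = 1$, hence $\mathcal{L} = \Z^d$.

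There is really no serious obstacle here: the argument is purely a remark about indices of abelian subgroups, and all the input is already encoded in the definitions (the determinant of a full-rank sublattice equals its index, and indices multiply in chains). I would only need to be slightly careful to verify that $\mathcal{L}$ is of full rank, which is immediate since already $\LL_1 \subseteq \mathcal{L}$ is of full rank, ensuring that $[\Z^d : \mathcal{L}]$ is a well-defined finite positive integer before applying the divisibility argument.
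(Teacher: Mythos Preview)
Your argument is correct and is essentially the same as the paper's: both use the chain $\LL_i \subseteq \mathcal{L} \subseteq \Z^d$ and multiplicativity of indices to conclude that $\det\mathcal{L}$ divides each $\det\LL_i$, hence divides their $\gcd$, which is $1$. The paper simply phrases the key divisibility step as ``repeating the argument in Lemma~\ref{lem:prime},'' whereas you spell it out explicitly.
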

\begin{proof}
Let $\LL$ be the sublattice generated by $\LL_1,\dots,\LL_n$. Repeating the argument in Lemma~\ref{lem:prime}, we see that $\det\LL \mid \det \LL_i$ for $1\leq i \leq n$.
Thus we have $\det\LL \mid \gcd(\det \LL_1,\dots,\det \LL_n) = 1$ and consequently $\LL = \Z^d$.
\end{proof}

\begin{proposition}\label{prop:nocoprime}
Let $\v_1+\LL_1,\dots,\v_n+\LL_n$ be a lattice tiling. Then for any $i$ and $j$ we have $\gcd(\det \LL_i,\det \LL_j)>1$.
\end{proposition}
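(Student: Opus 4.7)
The plan is to argue by contradiction and reduce the problem almost immediately to Lemma~\ref{lem:nocop}. The case $i=j$ is essentially trivial: if $\det \LL_i=1$ then $\LL_i=\Z^d$ and the single translate $\v_i+\LL_i$ already exhausts $\Z^d$, so in any tiling with more than one lattice we must have $\det \LL_i>1$. I therefore focus on $i\neq j$. Suppose for contradiction that there exist distinct indices $i,j$ with $\gcd(\det \LL_i,\det \LL_j)=1$. I would then apply Lemma~\ref{lem:nocop} to the two-lattice family $\{\LL_i,\LL_j\}$, whose determinants are coprime by hypothesis, to conclude that the sublattice of $\Z^d$ generated by $\LL_i\cup\LL_j$ is already all of $\Z^d$.

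In particular, the integer vector $\v_i-\v_j$ lies in $\LL_i+\LL_j$, so it can be written as
\[
\v_i-\v_j=\w_i+\w_j,\qquad \w_i\in\LL_i,\ \w_j\in\LL_j.
\]
Rearranging yields $\v_i-\w_i=\v_j+\w_j$, which is a single integer point lying simultaneously in $\v_i+\LL_i$ and in $\v_j+\LL_j$. This contradicts the disjointness requirement built into the definition of a lattice tiling, finishing the proof.

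The only point I would flag for the reader is that Lemma~\ref{lem:nocop} is being invoked with a family of size two, but this is legitimate: the hypothesis there is that the gcd of the given determinants equals $1$, which for two lattices is exactly the pairwise coprimality assumed here. There is no serious obstacle in this argument; the proposition is a clean consequence of Lemma~\ref{lem:nocop}, and the statement can be viewed simply as the observation that two lattice translates whose determinants are coprime must intersect, so they cannot both appear in a disjoint cover.
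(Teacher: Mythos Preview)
Your proof is correct and follows essentially the same route as the paper: invoke Lemma~\ref{lem:nocop} on the pair $\{\LL_i,\LL_j\}$ to get $\LL_i+\LL_j=\Z^d$, then write the difference of translate vectors accordingly to produce a common point in the two cosets, contradicting disjointness. The only addition is your explicit treatment of the case $i=j$ (forcing $\det\LL_i>1$ when $n>1$), which the paper leaves implicit.
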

\begin{proof}
Assume that $\gcd(\det \LL_1,\det \LL_2)=1$. By Lemma~\ref{lem:nocop}, $\LL_1
\cup \LL_2$  generates $\Z^d$ and so any integer vector $\v \in \Z^d$ can be written as $\w_1-\w_2$ for $\w_1\in \LL_1$ and $\w_2\in \LL_2$. Representing $\v_2-\v_1$ in this form, we have $\v_1+\w_1=\v_2+\w_2\in(\v_1+\LL_1)\cap(\v_2+\LL_2)\neq\emptyset$. We obtain a contradiction.
\end{proof}

\begin{lemma}\label{lem:volume}
In a lattice tiling, we have the following relation
\begin{equation}\label{eq:density}
\sum_{i=1}^n\frac{1}{\det \LL_i}=1.
\end{equation}
\end{lemma}
\begin{proof}
We observe that the number of integer points of $\LL_i$ in a cube $[-N,N]^d$
is equal to $(2N)^d/\det \LL_i+O(N^{d-1})$.
\end{proof}

This ``density'' results will turn out to be a subcase of \eqref{eq:mainres}
with $\z=\mathbf{1}$. Nevertheless, this already allows us to prove
Theorem~\ref{thm:prime}  from the introduction.

\begin{proof}[Proof of Theorem~\ref{thm:prime}]
Assume $p^k \mid \det{\LL_i}$ and $p^k \nmid \det{\LL_j}$ for $j \neq
i$. Equation ~\eqref{eq:density} is now
\[
1 = \frac{1}{\det{\LL_i}} + \sum_{j\neq i}\frac{1}{\det{\LL_j}} = \frac{1}{p^k
  a} + \frac{c}{p^k b}
\]
where $p \nmid b$ and $p \mid c$. Rewrite this as $p^k ab = b + ac$ and
contradition follows from divisibility by $p$.
\end{proof}

\subsection{Generating functions}

We define one of our main objects of study, namely a generating function that is attached to each sublattice translate of a lattice tiling. 
\begin{definition}\label{def:generating}
Let $\v+\LL$ be a sublattice translate, with $\LL$ any integer sublattice of $\Z^d$ and $\v$ any integer vector.  
We define its \emph{generating function}  by
\[
\Theta_{\LL+\v}(\z) := 
\sum_{\substack{\w\in \LL + \v \\ \w \geq 0}}  \z^{\w} = \sum_{\substack{\w\in \LL + \v \\ \w \geq 0}} z_1^{w_1} \dots z_d^{w_d}.
\]
\end{definition}

Note the important fact that we are restricting our summation to the positive orthant. This makes the series absolutely
convergent for all $\z \in \C^d$ such that $|z_j|<1$ for all $1\leq j \leq d$.  Our next step is
to give an algorithm for computing $\Theta_{\LL+\v}$ and to show that it is in fact a rational function on $\C^d$. 
To this end we introduce another definition.

\begin{definition}
Let $\LL$ be a sublattice. We define  $t_1,\dots,t_d$ as the minimal positive integers such that $(0,\dots,0,t_j,0,\dots)\in \LL$.
These integers are called the \emph{polar values} of $\LL$.
\end{definition}

\begin{example}
Assume that $\LL$ is a sublattice in dimension $2$ spanned by the vectors $(a,b)$ and $(c,d)$. Let us  define  $\tilde{e}=\gcd(a,c)$ 
and $\tilde{f}=\gcd(b,d)$. Then it is routine to see that
\[t_1=\frac{|ad-bc|}{\tilde{f}},\ t_2=\frac{|ad-bc|}{\tilde{e}}.\]
\end{example}

It easy to see that if $(a,b)$ and $(c,d)$ span $\LL$, then the determinant of $\LL$ is $|ad-bc|$. Hence the polar values divide the determinant, a
fact that we can prove in any dimension.

\begin{lemma}\label{lem:polardivide}
The polar values divide the determinant. 
\end{lemma}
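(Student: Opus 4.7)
The plan is to prove this by reducing it to Lagrange's theorem applied to the finite abelian group $G_\LL = \Z^d/\LL$, which has order $\det \LL$ by definition.

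First I would fix an index $j$ and consider the set $S_j = \{k \in \Z \colon k\, \mathbf{e}_j \in \LL\}$, where $\mathbf{e}_j$ is the $j$-th standard basis vector. Since $\LL$ is a subgroup of $\Z^d$, this set $S_j$ is a subgroup of $\Z$, so it equals $m\Z$ for some nonnegative integer $m$. Because $\LL$ has full rank, $S_j$ contains a positive integer, so $m > 0$; and by the defining minimality property of the polar value, $m = t_j$. In other words, $t_j \Z$ is exactly the annihilator in $\Z$ of the class of $\mathbf{e}_j$ in $G_\LL$.

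Next I would observe that this means the cyclic subgroup of $G_\LL$ generated by the image of $\mathbf{e}_j$ is isomorphic to $\Z/t_j \Z$, and in particular has order exactly $t_j$. By Lagrange's theorem applied to the finite abelian group $G_\LL$, the order of any element divides the order of the group, giving
\[
t_j \mid |G_\LL| = \det \LL,
\]
which is the desired conclusion. Repeating this argument for each $j \in \{1,\dots,d\}$ finishes the proof.

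The main obstacle is essentially nil here, as this is a routine group-theoretic observation; the only care needed is to correctly identify $t_j$ as the order of $\mathbf{e}_j + \LL$ in $G_\LL$, which follows immediately from the minimality built into the definition of the polar values.
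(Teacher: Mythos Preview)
Your proof is correct and follows essentially the same approach as the paper: both identify $t_j$ as the order of the image of $\mathbf{e}_j$ in the finite group $G_\LL=\Z^d/\LL$ and then invoke Lagrange's theorem. Your write-up is somewhat more detailed in justifying that this order is exactly $t_j$, but the underlying argument is identical.
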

\begin{proof}
Let $e_1=(1,0,\dots,0)$. 
The fact that $t_1$ is a polar value means that $t_1e_1\in \LL$ and for any $k=1,\dots,t_1-1$, $ke_1\not\in \LL$. 
Consider the subgroup of $G_\LL=\Z^d/\LL$ spanned by the image of $e_1$.  We see that its order is  $t_1$, so it divides the order of $G_{\LL}$.
\end{proof}

\begin{lemma}\label{lem:numofpoints}
Let $S$ be the half open cube
\begin{equation}\label{eq:S}
S=\{(x_1,\dots,x_d)\colon 0\le x_i<t_i\}.   
\end{equation}
Then we have
\begin{equation}\label{eq:numinS}
\#(S\cap \LL)=\frac{t_1t_2\dots t_d}{\det\LL}.
\end{equation}
\end{lemma}
\begin{proof}
Let $\{ \v_1,\dots,\v_d \}$ be a basis of $\LL\subseteq\Z^d$, and consider the map $J\colon\R^d\to\R^d$ defined by
\begin{equation}\label{eq:changeofvars}
J(x_1,\dots,x_d)=x_1 \v_1 +\dots+ x_d\v_d.
\end{equation}
Then $J$ is a linear map and $J(\Z^d)=\LL$. Let $T=J^{-1}(S)$. Then $T$ is a half-open parallelepiped with integral corners. We have
\[\#(S\cap \LL)=\#(T\cap\Z^d)=\vol(T)=\frac{\vol S}{\det J}=\frac{t_1\dots t_d}{\det \LL}.\]
\end{proof}

\begin{proposition}\label{prop:R-compute}
Let $\v+\LL$ be a sublattice translate and 
$t_1$,\dots, $t_d$ be the polar values of $\LL$.  Then
\[    \Theta_{\LL+\v}(\z)    =\frac{\Rv(\z)}{(1-z_1^{t_1})\dots(1-z_d^{t_d})},\]
where
\begin{equation}\label{eq:Rv}
\Rv(\z)=\sum_{\w\in S\cap(\v+\LL)} \z^\w.
\end{equation}
\end{proposition}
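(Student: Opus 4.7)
The plan is to exploit the fact that the polar values give rise to a ``coordinate-aligned'' sublattice of $\LL$, namely
\[
\LL_0 := t_1\Z \times t_2\Z \times \cdots \times t_d\Z \;\subset\; \LL,
\]
whose translates tile space by coordinate boxes of shape $S$. Once we decompose $\v+\LL$ along $\LL_0$-cosets, the generating function factors into a product of $d$ one-variable geometric series, which is exactly the denominator in the claimed formula.

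More concretely, I would proceed in four short steps. First, observe that $t_i e_i \in \LL$ by definition of the polar values, so $\LL_0 \subset \LL$, and $S$ (as defined in~\eqref{eq:S}) is a fundamental domain for $\LL_0$ acting by translation on $\R^d$. Hence every coset $\v + \LL \subset \Z^d$ admits a disjoint decomposition
\[
\v + \LL \;=\; \bigsqcup_{\w \in S \cap (\v+\LL)} (\w + \LL_0),
\]
with the coset representatives being precisely the points of $\v+\LL$ that fall inside $S$. (Lemma~\ref{lem:numofpoints} serves as a sanity check: the number of such representatives is $t_1\cdots t_d / \det\LL$, which matches the index $[\LL : \LL_0]$.)

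Second, since each representative $\w$ lies in $S$ it satisfies $\w \geq \mathbf 0$, and the non-negative part of $\w + \LL_0$ is exactly $\w + (t_1\Z_{\geq 0}) \times \cdots \times (t_d\Z_{\geq 0})$. Substituting into Definition~\ref{def:generating} and using that $\Chi_\z$ is a homomorphism, we get
\[
\Theta_{\LL}(\z) \;=\; \sum_{\w \in S \cap (\v + \LL)} \Chi_\z(\w) \, \prod_{i=1}^d \sum_{k_i \geq 0} z_i^{k_i t_i}.
\]
Third, on the polydisk $|z_i|<1$ the inner sums converge absolutely to $1/(1-z_i^{t_i})$, so the whole product equals $\prod_{i=1}^d (1-z_i^{t_i})^{-1}$, and factoring it out of the finite sum over $S \cap (\v+\LL)$ yields precisely $\Rv(\z)/\prod_i(1-z_i^{t_i})$, as defined in~\eqref{eq:Rv}. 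Finally, since both sides are rational functions on $\C^d$ agreeing on an open set, the identity holds meromorphically.

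There is no real obstacle here: the only point that needs care is verifying that $S \cap (\v+\LL)$ genuinely parametrizes $\LL/\LL_0$-cosets inside $\v+\LL$, which is immediate from $\LL_0\subset\LL$ and the fact that $S$ is a fundamental domain for $\LL_0$. Everything else reduces to rearranging an absolutely convergent series and summing geometric series in each variable separately.
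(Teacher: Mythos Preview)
Your proof is correct and essentially identical to the paper's own argument: the paper also introduces the sublattice $\LL_S = t_1\Z \times \cdots \times t_d\Z$ (your $\LL_0$), decomposes $\v+\LL$ into $\LL_S$-cosets indexed by $S\cap(\v+\LL)$, and factors out the product of geometric series. If anything, you are slightly more careful than the paper in spelling out why the non-negative part of each coset $\w+\LL_0$ is exactly $\w + \prod_i t_i\Z_{\ge 0}$ and in invoking convergence on the polydisk.
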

\begin{proof}
Let $\LL_S$ be the sublattice spanned by vectors $(t_1,0,\dots,0),\dots,(0,\dots,t_d)$. It is clear that
\[\Theta_{\LL_S}(\z)=\frac{1}{(1-z_1^{t_1})\dots(1-z_d^{t_d})}.\]
Now we have
\[
\v+\LL=\bigcup_{\w\in S\cap(\v+\LL)} \{    \w+\LL_S    \}.
\]
Hence
\[\Theta_{\LL+\v}=\sum_{\w\in S} \z^{\w} \Theta_{\LL_S}(\z)=\frac{\Rv(\z)}{(1-z_1^{t_1})\dots(1-z_d^{t_d})}.\]
\end{proof}

\begin{remark}
If the sublattice translate is in fact a sublattice, i.e. if $\v=\mathbf{0}$,
then we write $R(\z)$ instead of $\Rv(\z)$. Similarly, we write $\Theta_{\LL}$
instead of $\Theta_{\LL+\v}$ if $\v=0$. These notations will be used later in Lemma~\ref{lem:later}.
\end{remark}

\begin{example}
Consider a sublattice $\LL=(2\Z\times 2Z) \cup [(1,1)+(2\Z\times 2\Z)]=\{(x,y)\colon x+y=0\bmod 2\}$. The generating function of the sublattice $2\Z\times 2\Z$
is clearly $\frac{1}{(1-x^2)(1-y^2)}$. We get therefore
\[\Theta_\LL(x,y)=\frac{1+xy}{(1-x^2)(1-y^2)}.\]
\end{example}

\begin{example}\label{ex:figure1}
Assume that $\LL$ is spanned by  $\v_1=(4,1)$ and $\v_2=(2,3)$. Then $t_1=10$ and $t_2=5$.
In this example we see that $S\cap \LL=\{(0,0),(2,3),(4,1),(6,4),(8,2)\}$ (the marked points on Figure~\ref{fig:figure1}). 
Hence $R(x,y)=1+x^2y^3+x^4y^1+x^6y^4+x^8y^2$.
\end{example}

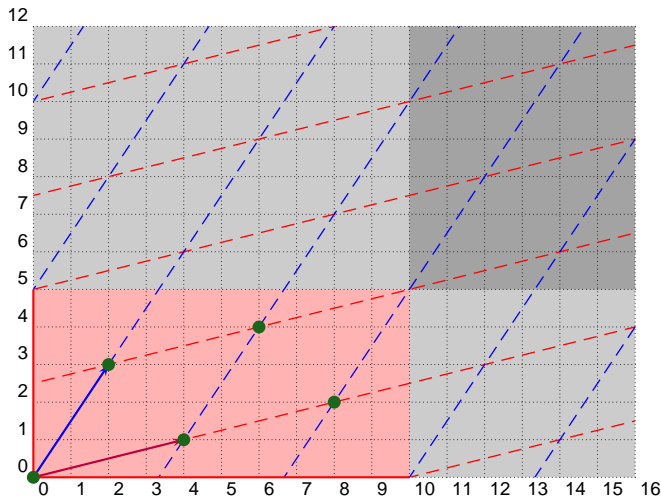
\begin{figure}
\begin{pspicture}(-5,-4)(5,4)
\pspolygon*[linestyle=none,fillcolor=blue,opacity=0.2](1,-3)(4,-3)(4,3)(1,3)
\pspolygon*[linestyle=none,fillcolor=green,opacity=0.2](-4,-0.5)(4,-0.5)(4,3)(-4,3)
\pspolygon*[linestyle=dotted,linecolor=red,fillcolor=red,opacity=0.3](-4,-3)(-4,-0.5)(1,-0.5)(1,-3)
\rput(-4,-3){\psscalebox{0.5}{\psgrid[subgriddiv=1,gridlabels=14pt,griddots=10](0,0)(16,12)}}

\begin{psclip}{\pspolygon[linestyle=none](-4,-3)(4,-3)(4,3)(-4,3)}
\multido{\rsx=-16+1,\rsy=-11+1.5,\rex=12+1,\rey=-4+1.5}{10}{\psline[linecolor=red,linestyle=dashed, linewidth=0.5pt](\rsx,\rsy)(\rex,\rey)}
\multido{\rsx=-16+2,\rsy=-11+0.5,\rex=-5+2,\rey=5.5+0.5}{14}{\psline[linecolor=blue,linestyle=dashed, linewidth=0.5pt](\rsx,\rsy)(\rex,\rey)}
\end{psclip}
\psline[linecolor=red](-4,-3)(-4,-0.5)\psline[linecolor=red](-4,-3)(1,-3)
\psline[linecolor=purple]{->}(-4,-3)(-2,-2.5)
\psline[linecolor=blue]{->}(-4,-3)(-3,-1.5)
\pscircle[fillcolor=darkgreen,fillstyle=solid,linestyle=none](-4,-3){0.1}
\pscircle[fillcolor=darkgreen,fillstyle=solid,linestyle=none](-2,-2.5){0.1}
\pscircle[fillcolor=darkgreen,fillstyle=solid,linestyle=none](0,-2){0.1}
\pscircle[fillcolor=darkgreen,fillstyle=solid,linestyle=none](-3,-1.5){0.1}
\pscircle[fillcolor=darkgreen,fillstyle=solid,linestyle=none](-1,-1){0.1}
\end{pspicture}
\caption{A sublattice spanned by $(4,1)$ and $(2,3)$. Here the polar values are $t_1 =  10$ and $t_2 = 5$.  See Example~\ref{ex:figure1}.}\label{fig:figure1}
\end{figure}

\bigskip
\section{More on dual points and characters}\label{sec:three}
Let us recall that $\z \in \T^d$ is called a dual point of $\LL$ if
$\Chi_\z(\w)=\z^{\w}=1$ for all $\w\in\LL$. Here $\T^d = \{\z \in \C^d : |z_i| =
1\}$.
The following ``orthogonality relation'' for the characters $\Chi_\z$ is
well-known. But as it is crucial to our proofs, we include the proof for the sake of completeness.

\begin{lemma}\label{prop:sigmazero}
Let $\LL\subseteq\Z^d$ be a sublattice with a basis $\v_1,\dots,\v_d$ and the fundamental
parallelepiped $\P=\{ \lambda_1\v_1+\dots+\lambda_d\v_d: 0\leq\lambda_i < 1 \}\subset\R^d$. If $\z=(z_1,\dots,z_d)$ is a dual point different from $\mathbf{1}=(1,\dots,1)$, then
\[\sum_{\w\in\Z^d\cap\P}\Chi_\z (\w)=0.\]
\end{lemma}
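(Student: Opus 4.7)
The plan is to invoke the standard character orthogonality trick on the finite abelian group $G_\LL = \Z^d/\LL$. First I would observe that the half-open parallelepiped $\P$ is a fundamental domain for translation by $\LL$ on $\R^d$, so every $\w \in \Z^d$ admits a \emph{unique} decomposition $\w = \v + \u$ with $\v \in \Z^d \cap \P$ and $\u \in \LL$. In particular, $\Z^d \cap \P$ is a complete set of coset representatives for $G_\LL$, and its cardinality equals $\det \LL$.

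Next, since $\z$ is a dual point we have $\Chi_\z(\u) = 1$ for every $\u \in \LL$, so $\Chi_\z$ descends to a well-defined homomorphism $G_\LL \to \C^*$, which I will still denote by $\Chi_\z$. I claim this induced homomorphism is nontrivial. Indeed, since $\z \neq \mathbf{1}$, some coordinate $z_j \neq 1$, and evaluating on the standard basis vector $e_j \in \Z^d$ gives $\Chi_\z(e_j) = z_j \neq 1$. Writing $e_j = \v_0 + \u_0$ with $\v_0 \in \Z^d \cap \P$ and $\u_0 \in \LL$, we get $\Chi_\z(\v_0) = \Chi_\z(e_j) \Chi_\z(\u_0)^{-1} = z_j \neq 1$, so $\Chi_\z$ is nontrivial on at least one coset representative.

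Now I apply the classical shift trick. Let $S = \sum_{\v \in \Z^d \cap \P} \Chi_\z(\v)$ and fix $\v_0$ as above. For each $\v \in \Z^d \cap \P$, the vector $\v_0 + \v \in \Z^d$ has a unique decomposition $\v_0 + \v = \v' + \u$ with $\v' \in \Z^d \cap \P$ and $\u \in \LL$, and the assignment $\v \mapsto \v'$ is a bijection of $\Z^d \cap \P$ onto itself (its inverse is obtained by subtracting $\v_0$ and reducing modulo $\LL$). Using $\Chi_\z(\u) = 1$, we obtain $\Chi_\z(\v_0) \Chi_\z(\v) = \Chi_\z(\v')$, and summing yields
\begin{equation}
\Chi_\z(\v_0) \cdot S = \sum_{\v \in \Z^d \cap \P} \Chi_\z(\v') = S.
\end{equation}
Therefore $(\Chi_\z(\v_0) - 1) S = 0$, and since $\Chi_\z(\v_0) \neq 1$, we conclude $S = 0$.

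I do not anticipate any substantive obstacle---this is a routine finite-group character computation. The only care needed is in verifying that $\Z^d \cap \P$ is a complete set of coset representatives (standard lattice theory, via the half-open convention $0 \le \lambda_i < 1$) and in producing a representative $\v_0$ on which the induced character is nontrivial, which in turn rests only on the elementary fact that $\z \neq \mathbf{1}$ forces $\Chi_\z$ itself to be nontrivial on $\Z^d$.
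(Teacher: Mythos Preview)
Your proof is correct and follows essentially the same approach as the paper: identify $\Z^d\cap\P$ with a complete set of coset representatives for $G_\LL=\Z^d/\LL$, note that $\Chi_\z$ descends to a nontrivial character on this finite group, and conclude that the sum vanishes. The only difference is that the paper simply invokes the standard fact that a nontrivial character sums to zero over a finite group, whereas you spell out the shift-trick proof of that fact; this makes your version more self-contained but not genuinely different in method.
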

\begin{proof}
Observe that the elements of $\P\cap \Z^d$ are in one-to-one correspondence
with elements of the quotient group $\Z^d/\LL$. The character $\Chi_\z$
can now be regarded as a character on the group $\Z^d/\LL$. This character is non-trivial 
because $\z\neq\mathbf{1}$. Now we use the standard fact that the average of a non-trivial character over a compact group (in particular over
a finite group) is zero, and we are done.
\end{proof}

\begin{example}
Consider again $\LL$ generated by $\v_1=(4,1)$ and $\v_2=(2,3)$. 
The points in $\P$ are $(0,0)$, $(1,1)$, $(2,1)$, $(3,1)$, 
$(2,2)$, $(3,2)$, $(4,2)$, $(3,3)$, $(4,3)$  and $(5,3)$;
see Figure~\ref{fig:figure2}.
We consider $\z=(\e,\e)$, where $\e=e^{2\pi i/5}$. Then $\Chi_\z$ has the following values at these points:
$\e^0,\e^2,\e^3,\e^4,\e^4,\e^0,\e^1,\e^1,\e^2$ and $\e^3$. The sum is clearly $0$.
\end{example}

\begin{figure}
\begin{pspicture}(-5,-3)(5,3)
\pspolygon*[linestyle=dotted,linecolor=blue,fillcolor=lightblue,opacity=0.1](-4,-2)(0,-1)(2,2)(-2,1)
\psline[linestyle=solid,linecolor=blue]{->}(-4,-2)(0,-1)
\psline[linestyle=solid,linecolor=blue]{->}(-4,-2)(-2,1)
\rput(-4,-2){\psgrid[subgriddiv=1,gridlabels=7pt,griddots=10](0,0)(8,4)}
\pscircle[fillcolor=red,fillstyle=solid](-4,-2){0.1}
\pscircle[fillcolor=red,fillstyle=solid](-3,-1){0.1}
\pscircle[fillcolor=red,fillstyle=solid](-2,-1){0.1}
\pscircle[fillcolor=red,fillstyle=solid](-1,-1){0.1}
\pscircle[fillcolor=red,fillstyle=solid](-2,0){0.1}
\pscircle[fillcolor=red,fillstyle=solid](-1,0){0.1}
\pscircle[fillcolor=red,fillstyle=solid](0,0){0.1}
\pscircle[fillcolor=red,fillstyle=solid](-1,1){0.1}
\pscircle[fillcolor=red,fillstyle=solid](0,1){0.1}
\pscircle[fillcolor=red,fillstyle=solid](1,1){0.1}
\end{pspicture}
\caption{The points in the parallelepiped $P$ for the sublattice spanned by $(4,1)$ and $(2,3)$.}\label{fig:figure2}
\end{figure}
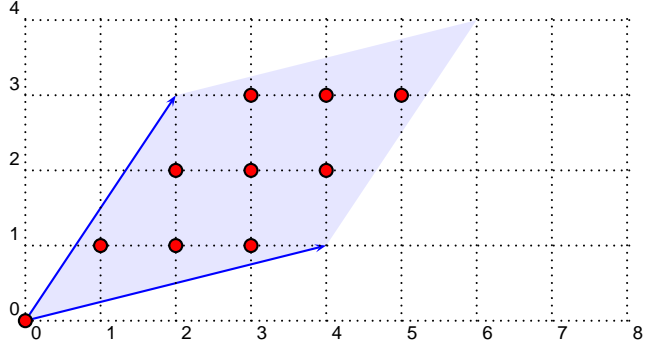

The next two results will also be very useful for us. We first prove it for sublattices, namely when the translate vector is $(0,\dots,0)$.

\begin{lemma}\label{lem:iszero}
Suppose that $\z=(z_1,\dots,z_d)\in\T^d$ satisfies $z_1^{t_1}=\dots=z_d^{t_d}=1$ and $\z$ is \emph{not} a dual point of $\LL$ (the $t_i$ are the polar values). 
Then $R(\z)=0$.
\end{lemma}
\begin{proof}
By definition $\z$ is a dual point of the sublattice $\LL_S$, defined to be the integral span of the 
vectors $(t_1,0,\dots,0),\dots,(0,\dots,0,t_d)$.
The map $J$ defined in \eqref{eq:changeofvars} induces a dual map $J^*\colon\T^d\to\T^d$ by the formula
\[\Chi_{J^*\z}(\w)=\Chi_{\z}(J\w).\]
Let $\y=J^*\z$, $\TT=J^{-1}(\LL_S)$ and $\P(\TT)$ the fundamental parallelepiped of $\TT$.
By definition, $J^*$ maps dual points of $\LL_S$ to dual points of $J^{-1}(\TT)$.
We have
\[
R(z_1,\dots,z_d) = \sum_{\t \in S \cap \LL} \Chi_\z (\t) = \sum_{\w \in
  \P(\TT)} \Chi_\z (J\w) = \sum_{\w \in \P(\TT)}\Chi_\y (\w).
\]

Now $\y$ is a dual point of $\TT$ and $y \neq \mathbf{1}$, for otherwise $\z$ would be a dual point of $\LL$.
The result follows by applying Lemma~\ref{prop:sigmazero}.
\end{proof}

The version for a translated sublattice is:

\begin{lemma}\label{lem:later}
Let $\LL$ be a sublattice with polar values $(t_1,\dots,t_d)$.
Suppose that $\z=(z_1,\dots,z_d)$ satisfies $z_1^{t_1}=\dots=z_d^{t_d}=1$. Then for any translate vector $\v$ we have
\[\Rv(\z)=\Chi_\z(\v)R(\z).\]
\end{lemma}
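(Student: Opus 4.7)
The plan is to exploit the fact that under the hypothesis $z_1^{t_1} = \dots = z_d^{t_d} = 1$, the character $\Chi_\z$ is trivial on the auxiliary lattice $\LL_S$ spanned by $t_1 e_1, \dots, t_d e_d$, so that adding any element of $\LL_S$ to a point does not change the value of $\Chi_\z$ on it. This lets me set up a bijection between $S \cap \LL$ and $S \cap (\v + \LL)$ that differs only by a shift by $\v$ modulo $\LL_S$, and consequently by an overall multiplicative factor of $\Chi_\z(\v)$.

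Concretely, I would first recall from the proof of Proposition~\ref{prop:R-compute} that $S$ is a fundamental domain for $\LL_S \subset \LL$, so every element of $\Z^d$ has a unique representative in $S$ modulo $\LL_S$. Applied to the coset $\v + \LL$, this means each element of $S \cap (\v+\LL)$ is obtained from a unique element $\w \in S \cap \LL$ by the rule: let $\w' \in S$ be the unique representative of $\v + \w$ modulo $\LL_S$. This map $\w \mapsto \w'$ is the desired bijection; surjectivity follows since any $\w' \in S \cap (\v+\LL)$ can be written $\w' = \v + \w + \u$ with $\w \in \LL$, $\u \in \LL_S$, and then adjusting by $\LL_S$ places $\w$ in $S$.

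Next, because $\w' - \v - \w \in \LL_S$ and $z_j^{t_j} = 1$ for each $j$, the character $\Chi_\z$ vanishes (i.e., equals $1$) on $\LL_S$. Using the multiplicative property $\Chi_\z(\a + \b) = \Chi_\z(\a)\Chi_\z(\b)$, I get
\[
\Chi_\z(\w') = \Chi_\z(\v + \w)\,\Chi_\z(\w' - \v - \w) = \Chi_\z(\v)\,\Chi_\z(\w).
\]
Summing this identity over $\w \in S \cap \LL$ via the bijection yields
\[
\Rv(\z) = \sum_{\w' \in S \cap (\v+\LL)} \Chi_\z(\w') = \Chi_\z(\v)\sum_{\w \in S \cap \LL} \Chi_\z(\w) = \Chi_\z(\v)\, R(\z),
\]
as desired.

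The only delicate point is verifying that the proposed map $\w \mapsto \w'$ really is a bijection between $S \cap \LL$ and $S \cap (\v + \LL)$; once one observes that both sets have cardinality $t_1 \cdots t_d/\det\LL$ by Lemma~\ref{lem:numofpoints} and that $S$ is a fundamental domain for $\LL_S$, this is straightforward and is the only substantive step. Everything else is the routine observation that $\Chi_\z$ is trivial on $\LL_S$ under the stated hypothesis.
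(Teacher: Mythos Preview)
Your proof is correct and follows essentially the same approach as the paper's own argument. The paper also uses the bijection between $S\cap\LL$ and $S\cap(\v+\LL)$ obtained by translating by $\v$ and reducing modulo $\LL_S$, together with the triviality of $\Chi_\z$ on $\LL_S$; the only cosmetic difference is that the paper splits the sum into the two cases $\w-\v\in S$ and $\w-\v\notin S$ and handles them separately, whereas you treat both cases uniformly via the fundamental-domain bijection, which is a slightly cleaner presentation of the same idea.
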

\begin{proof}
Let $S$ be the cube as defined in \eqref{eq:S}. We have:
\[
R^{\v}(\z) = \sum_{\substack{\w \in \LL \\ \w + \v \in S}} \Chi_{\z} (\w + \v)
= \Chi_{\z}(\v) \sum_{\substack{\w \in \LL \\ \w + \v \in S}} \Chi_{\z}(\w)
= \Chi_{\z}(\v) \sum_{\w \in \LL \cap (S-\v)} \Chi_{\z}(\w). 
\]
Observer that for any $\w \in \LL \cap (S-v)$, there is a unique $\t_\w \in \LL_S$
such that $\w + \t_\w \in S$. The points in $\LL \cap S$ and $\LL \cap (S-v)$
are actually in one-to-one correspondence. Because $\z$ is a dual point of $\LL_s$, we have $\Chi_{\z}(\w + \t_\w) = \Chi_{\z}(\w)
\Chi_{\z}(\t_\w) = \Chi_{\z}(\w)$, and hence
\[
\sum_{\w \in \LL \cap (S-\v)} \Chi_{\z}(\w) = 
\sum_{\w \in \LL \cap (S-\v)} \Chi_{\z}(\w + \t_\w) = 
\sum_{\w \in \LL \cap S} \Chi_{\z}(\w) = R(\z).
\]
\end{proof}

We finish this section with another important result, whose proof follows immediately from Pontryagin duality.

\begin{lemma}
Assume that $\LL\subset\Z^d$ is a sublattice and $\v\in \Z^d\setminus \LL$. Then there exists a dual point $\z\in\WGL$
such that $\Chi_\z (\v)\neq 1$.
\end{lemma}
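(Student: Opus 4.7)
The plan is to reduce the statement to the standard fact that characters of a finite abelian group separate points, and then lift the resulting character on $G_\LL$ back to a dual point in $\WGL$.

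First I would observe that since $\v \notin \LL$, the image $\bar\v$ of $\v$ in the finite quotient group $G_\LL = \Z^d/\LL$ is a non-identity element. The goal therefore becomes: produce a character $\psi$ of the finite abelian group $G_\LL$ with $\psi(\bar\v) \neq 1$. Once this is done, I compose $\psi$ with the projection $\pi\colon \Z^d \to G_\LL$ to obtain a character $\chi := \psi \circ \pi$ of $\Z^d$. Because $\Z^d$ is free abelian of rank $d$, any character of it into $S^1$ has the form $\w \mapsto \Chi_\z(\w)$ for a uniquely determined $\z \in \T^d$ (just take $z_j := \chi(e_j)$). Moreover $\chi$ vanishes on $\LL$ by construction, so the corresponding $\z$ is a dual point of $\LL$, i.e. $\z \in \WGL$. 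Finally, $\Chi_\z(\v) = \psi(\bar\v) \neq 1$, which is the conclusion.

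The remaining task is the lift itself, i.e.\ producing $\psi$ on $G_\LL$. By the structure theorem for finite abelian groups we can write
\[
G_\LL \;\cong\; \Z/n_1\Z \oplus \cdots \oplus \Z/n_r\Z,
\]
so $\bar\v$ corresponds to a tuple $(a_1,\dots,a_r)$ that is not all-zero; say $a_k \not\equiv 0 \pmod{n_k}$. Then the character
\[
\psi(b_1,\dots,b_r) \;:=\; \exp\!\bigl(2\pi i\, b_k/n_k\bigr)
\]
is well-defined and satisfies $\psi(\bar\v) = \exp(2\pi i\, a_k/n_k) \neq 1$. This is the Pontryagin-duality input referred to in the lemma statement, and it is essentially the only non-trivial ingredient.

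I do not expect any serious obstacle. The only point that deserves a brief verification is that every character of $\Z^d$ factoring through $G_\LL$ is of the form $\Chi_\z$ for some $\z \in \T^d$; this is immediate from the freeness of $\Z^d$ and from the observation that if $\chi$ is trivial on $\LL$, then in particular the image of each standard basis vector under $\chi$ is a root of unity whose order divides the relevant polar value, so the resulting $\z$ automatically has finite order and lies in $\WGL$.
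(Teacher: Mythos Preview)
Your argument is correct and is precisely the unpacking of the paper's one-line justification: the paper simply states that the lemma ``follows immediately from Pontryagin duality'' and gives no further details, while you spell out explicitly how the separation of points by characters on the finite group $G_\LL$ yields the desired dual point. The approach is the same; you have just made it explicit.
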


\begin{corollary}\label{cor:subset}
If we are given two sublattices $\LL_1$ and $\LL_2$ and each dual point of $\LL_1$ is a dual point of $\LL_2$ then $\LL_2\subseteq \LL_1$. 
In particular, if $\WG{1} = \WG{2}$ as subgroups of $\T^d$, then $\LL_1 = \LL_2$.
\end{corollary}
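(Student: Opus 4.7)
The plan is to prove the containment by contrapositive, directly invoking the preceding lemma, which asserts that for any lattice $\LL \subset \Z^d$ and any $\v \in \Z^d \setminus \LL$, there is a dual point $\z \in \WGL$ with $\Chi_\z(\v) \neq 1$. This lemma is tailor-made for separating a lattice from a vector outside it using a character, so the corollary should follow essentially formally.

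First, I would assume for contradiction that $\LL_2 \not\subset \LL_1$. Then I can pick some $\v \in \LL_2 \setminus \LL_1$. Applied to $\LL_1$, the preceding lemma produces a dual point $\z \in \WG{1}$ such that $\Chi_\z(\v) \neq 1$. By the hypothesis of the corollary, every dual point of $\LL_1$ is also a dual point of $\LL_2$, so $\z \in \WG{2}$. But $\v \in \LL_2$ forces $\Chi_\z(\v) = 1$ by the definition of a dual point, which contradicts $\Chi_\z(\v) \neq 1$. Hence $\LL_2 \subset \LL_1$.

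For the "in particular" clause, if $\WG{1} = \WG{2}$ as subgroups of $\T^d$, then in particular every dual point of $\LL_1$ is a dual point of $\LL_2$, so the first part gives $\LL_2 \subset \LL_1$. By symmetry (every dual point of $\LL_2$ is also a dual point of $\LL_1$), the same argument gives $\LL_1 \subset \LL_2$. Combining the two inclusions yields $\LL_1 = \LL_2$.

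There isn't really a main obstacle here: all the work is hidden in the preceding lemma, which itself is a standard consequence of Pontryagin duality for the finite abelian groups $G_{\LL_i} = \Z^d/\LL_i$. The only thing to be careful about is the direction of the implication (dual points of $\LL_1$ are dual points of $\LL_2$ gives $\LL_2 \subset \LL_1$, not the reverse inclusion), since adding more vectors to a lattice shrinks its dual group.
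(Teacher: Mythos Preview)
Your proof is correct and follows exactly the same contrapositive argument as the paper: pick $\v\in\LL_2\setminus\LL_1$, use the preceding lemma to find a dual point $\z\in\WG{1}$ with $\Chi_\z(\v)\neq 1$, and contradict $\z\in\WG{2}$. Your explicit treatment of the ``in particular'' clause via symmetry is a slight elaboration, but there is no substantive difference.
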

\begin{proof}
Assume that $\v\in \LL_2\setminus \LL_1$. Then there exist a dual point $\z$ of $\LL_1$ such that $\Chi_\z (\v)\neq 1$.
But as $\z$ is a dual point of $\LL_2$ by assumption, we get $\Chi_\z (\v)=1$, a contradiction.
\end{proof}

\bigskip
\section{Residue calculus}\label{sec:rescalc}
\subsection{Residues of generating functions}
Assume now that we have sublattices $\LL_1,\dots,\LL_n$ in $\Z^d$. Denote the polar values of the sublattice $\LL_i$ by
$t_{i,1},\dots,t_{i,d}$. Assume that there exist non-negative vectors $\v_1,\dots,\v_n$ such that we have the lattice tiling
\begin{equation}\label{eq:tiling}
\begin{cases}
\{ \v_1+\LL_1\} \cup \{ \v_2+\LL_2 \} \cup\dots\cup \{ \v_n+\LL_n\}=\Z^d,\\
\{\v_i+\LL_i\}\cap\{\v_j+\LL_j\}=\emptyset & \textrm{for $i\neq j$.}
\end{cases}
\end{equation}
Let $\Theta_i$ be the generating function for $\v_i+\LL_i$. 
\begin{lemma}\label{lem:fund}
We have the following equality of rational functions, with $\z=(z_1,\ldots,z_d)$:
\begin{equation}\label{eq:fund}
\sum_{i=1}^n\Theta_i(\z)=\frac{1}{(1-z_1)\cdots(1-z_d)}.
\end{equation}
Moreover, the identity \eqref{eq:fund} is equivalent to the condition that $\v_1+\LL_1$, \dots, $\v_n+\LL_n$ provide a lattice tiling of $\Z^d$.
\end{lemma}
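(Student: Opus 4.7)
My plan is to treat both directions by working with the generating functions as absolutely convergent power series in the polydisk $\{|z_j| < 1 : 1 \leq j \leq d\}$, where coefficient comparison is legitimate, and then appealing to analytic continuation to obtain the identity of rational functions.

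First I would rewrite the right-hand side as the geometric series
\[
\frac{1}{(1-z_1)\cdots(1-z_d)} \;=\; \sum_{\w\ge \mathbf{0}} \Chi_\z(\w),
\]
valid in the polydisk. By Definition~\ref{def:generating}, each $\Theta_i(\z)$ is likewise the sum of $\Chi_\z(\w)$ over the nonnegative points of $\v_i + \LL_i$, and by Proposition~\ref{prop:R-compute} it is a rational function with denominator dividing $(1-z_1^{t_{i,1}})\cdots (1-z_d^{t_{i,d}})$.

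For the forward direction, I assume the tiling identity \eqref{eq:tiling}. Intersecting both sides of \eqref{eq:tiling} with the nonnegative orthant gives a disjoint covering of $\{\w \in \Z^d : \w \geq \mathbf{0}\}$ by the sets $(\v_i + \LL_i) \cap \{\w \geq \mathbf{0}\}$. Summing $\Chi_\z(\w)$ over this disjoint covering term by term produces $\sum_i \Theta_i(\z)$ on one side and $\sum_{\w\ge\mathbf{0}}\Chi_\z(\w)$ on the other, which is precisely \eqref{eq:fund} as power series in the polydisk. Since both sides are rational, this extends to equality of rational functions.

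For the backward direction, suppose \eqref{eq:fund} holds as rational functions. Expanding each $\Theta_i$ as its defining power series in the polydisk and comparing coefficients of $\Chi_\z(\w)$ for each $\w \geq \mathbf{0}$ yields
\[
\#\{i : \w \in \v_i + \LL_i\} \;=\; 1
\]
for every $\w \geq \mathbf{0}$. To promote this to a tiling of all of $\Z^d$, I would fix $T = \lcm\{t_{i,j} : 1\le i\le n,\ 1\le j\le d\}$ so that $T\mathbf{1} \in \LL_i$ for every $i$, which makes membership in $\v_i + \LL_i$ invariant under translation by $T\mathbf{1}$. For any $\w \in \Z^d$, choosing $k$ large enough that $\w + kT\mathbf{1} \geq \mathbf{0}$ and applying the nonnegative-orthant statement to $\w + kT\mathbf{1}$, then translating back, shows that $\w$ lies in exactly one $\v_i + \LL_i$, which is \eqref{eq:tiling}.

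The only genuine subtlety is the final shift argument in the backward direction; everything else is a direct unpacking of definitions. I do not see a substantial obstacle beyond being careful that comparison of formal power series coefficients is justified, which it is because the polydisk is a common domain of absolute convergence for all the $\Theta_i$ and for the geometric series.
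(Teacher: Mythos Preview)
Your argument is correct and follows essentially the same route as the paper: both directions are handled by passing to absolutely convergent power series in the open polydisk, comparing coefficients, and then invoking that rational functions agreeing on an open set are equal. The paper dispatches the converse with a single sentence (``It is also clear that \eqref{eq:sumation} implies the tiling condition''), whereas you supply the missing detail---the shift by a multiple of $T\mathbf{1}$ to pass from the nonnegative orthant to all of $\Z^d$---which is exactly the right way to make that step rigorous.
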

\begin{proof}
The right-hand side of \eqref{eq:fund} is the generating function for the tiling consisting of a single sublattice $\Z^d$. Let us choose $\z$
such that $|z_j|<1$ for all $j$. By the tiling condition we have
\begin{equation}\label{eq:sumation}
\sum_{\substack{\w\in\Z^d\\\w\geq 0}} \z^{\w} =
\sum_{j=1}^n\sum_{\substack{\w\in\v_j+\LL_j\\\w\geq 0}} \z^{\w},
\end{equation}
where we use the fact that we can change the summation order of absolutely convergent power series. Equation~\eqref{eq:sumation}
is equivalent to \eqref{eq:fund} for $\z$ such that $|z_j|<1$ (see Definition~\ref{def:generating}). Both sides of \eqref{eq:fund}
are rational functions agreeing on an open subset of $\C^d$, so they are equal.
It is also clear that \eqref{eq:sumation} implies the tiling condition.
\end{proof}

\medskip
We now fix some $\z\in\C^d$.
Consider a torus $T=\{\mathbf{u}\in\C^d\colon |z_1-u_1|=\ldots=|z_d-u_d|=\e\}$ for $\e>0$ sufficiently small. Equation \eqref{eq:fund} implies that
\begin{equation}\label{eq:integrals}
\sum_{j=1}^n\int_{T}\Theta_j(u)d\mathbf{u}=\int_T\frac{d\mathbf{u}}{(1-u_1)\dots(1-u_d)}.
\end{equation}
Here $d\u=du_1\wedge\ldots\wedge du_d$ is the volume form on $T$ (note that \eqref{eq:integrals} can be regarded as comparison of multidimensional residues,
see \cite{Tsikh}). We want to study the integrals appearing on the left-hand side of \eqref{eq:integrals}.
To this end recall that by Proposition~\ref{prop:R-compute} that we can write
\[\Theta_i(\z)=\frac{R_i(\z)}{(1-z_1^{t_{i,1}})\cdots(1-z_d^{t_{i,d}})},\]
where
\[
R_i(\z) = \sum_{\w \in S_i \cap (\v_i + \LL_i)} \Chi_{\z}(\w).
\]

\begin{lemma}\label{lem:stupidintegral}
For any $j=1,\ldots,n$, the integral
\[\int_{T}\frac{R_j(\mathbf{u})}{(1-u_1^{t_{j,1}})\dots (1-u_d^{t_{j,d}})}d\mathbf{u}\]
is zero unless $\z \in \widehat{G}_{\LL_j}$. In the latter case it is equal to $ \frac{(-2\pi i)^d}{\det \LL_j} \Chi_\z (\v_j+\mathbf{1})$.
\end{lemma}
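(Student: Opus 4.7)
The plan is to evaluate the integral by the multidimensional Cauchy residue theorem, which in this context amounts to performing $d$ nested one-variable contour integrals around the small circles $|u_k - z_k| = \e$. The only points of $\C^d$ where the denominator $\prod_k(1-u_k^{t_{j,k}})$ vanishes are those satisfying $z_k^{t_{j,k}} = 1$ for every $k$, so I would split the argument into two cases according to whether the center $\z$ of the torus $T$ is such a singularity.

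First, suppose there exists some coordinate index $k$ with $z_k^{t_{j,k}} \neq 1$. Then for $\e$ sufficiently small, the factor $(1-u_k^{t_{j,k}})^{-1}$ is holomorphic on the closed disk $|u_k - z_k| \leq \e$, so when I perform the iterated integral starting with the variable $u_k$, that integral vanishes by Cauchy's theorem. This already shows the integral is zero whenever $\z$ is not a common root of $z_k^{t_{j,k}} = 1$; in particular it is zero unless $\z$ is a dual point of the sublattice $\LL_S$ spanned by $(t_{j,1},0,\dots,0),\dots,(0,\dots,0,t_{j,d})$, which is a strictly weaker condition than $\z \in \widehat{G}_{\LL_j}$.

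Second, assume $z_k^{t_{j,k}} = 1$ for all $k$. The derivative of $1 - u_k^{t_{j,k}}$ at $u_k = z_k$ is $-t_{j,k} z_k^{t_{j,k}-1} \neq 0$, so each factor of the denominator has only a simple zero at $z_k$. The iterated Cauchy formula then gives
\[
\int_T \frac{R_j(\u)\, d\u}{\prod_k(1-u_k^{t_{j,k}})} \;=\; (2\pi i)^d \cdot \frac{R_j(\z)}{\prod_k\bigl(-t_{j,k} z_k^{t_{j,k}-1}\bigr)}
\;=\; \frac{(-2\pi i)^d\, R_j(\z)\, \Chi_\z(\mathbf 1)}{t_{j,1}\cdots t_{j,d}},
\]
where I used $z_k^{t_{j,k}-1} = z_k^{-1}$ (since $z_k^{t_{j,k}} = 1$) to rewrite $\prod_k z_k^{1-t_{j,k}} = z_1\cdots z_d = \Chi_\z(\mathbf 1)$. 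Now apply Lemma \ref{lem:later}, which under the standing hypothesis $z_k^{t_{j,k}} = 1$ lets me write $R_j(\z) = R^{\v_j}(\z) = \Chi_\z(\v_j)\, R(\z)$, with $R(\z)$ computed for the lattice $\LL_j$ itself. If $\z \notin \widehat{G}_{\LL_j}$, Proposition \ref{prop:iszero} forces $R(\z) = 0$, and the integral vanishes as claimed. If $\z \in \widehat{G}_{\LL_j}$, then $\Chi_\z$ is identically $1$ on $\LL_j$, so by Lemma \ref{lem:numofpoints} we have $R(\z) = \#(S_j \cap \LL_j) = t_{j,1}\cdots t_{j,d}/\det \LL_j$; substituting into the residue formula and collecting terms yields exactly $\frac{(-2\pi i)^d}{\det \LL_j}\Chi_\z(\v_j + \mathbf 1)$.

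The only genuinely delicate step is the bookkeeping in the residue formula: tracking the signs and showing that the factor $\prod_k z_k^{t_{j,k}-1}$ arising in the denominator collapses, via $z_k^{t_{j,k}} = 1$, to $\Chi_\z(-\mathbf 1) = \Chi_\z(\mathbf 1)^{-1}$, so that moving it into the numerator produces the expected $\Chi_\z(\mathbf 1)$ and combines with $\Chi_\z(\v_j)$ to give $\Chi_\z(\v_j + \mathbf 1)$. Everything else is a routine application of results already established in the paper.
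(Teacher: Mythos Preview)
Your proof is correct and follows essentially the same route as the paper: iterated one-variable residues to reduce to $R_j(\z)\cdot\frac{(-2\pi i)^d z_1\cdots z_d}{t_{j,1}\cdots t_{j,d}}$, then Proposition~\ref{prop:iszero} and Lemma~\ref{lem:later} to evaluate $R_j(\z)$, with Lemma~\ref{lem:numofpoints} supplying the count $t_{j,1}\cdots t_{j,d}/\det\LL_j$ in the dual-point case. The only cosmetic difference is that the paper computes the one-variable residue via the substitution $x=zx_0$ rather than by differentiating the denominator.
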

\begin{proof}
As $R_j$ is analytic at $\z$, we have
\begin{equation}\label{eq:int}
\int_T\frac{R_j(\u)}{(1-u_1^{t_{j,1}})\dots (1-u_d^{t_{j,d}})}d\mathbf{u}=
R_j(\z)\int_T\frac{d\mathbf{u}}{(1-u_1^{t_{j,1}})\dots(1-u_d^{t_{j,d}})}.
\end{equation}
But
\[\int_T\frac{d\mathbf{u}}{(1-u_1^{t_{j,1}})\dots(1-u_d^{t_{j,d}})}=\prod_{k=1}^d\int_{|u_k-z_k|=\e}\frac{du_k}{1-u_k^{t_{j,k}}}.\]
By Goursat's lemma the integrals on the right-hand side vanish unless $z_1^{t_{j,1}}=\dots=z_d^{t_{j,d}}=1$. So assume that $z_1^{t_{j,1}}=\dots=z_d^{t_{j,d}}=1$.
Since for any $x_0\in\C$ and any integer $m>0$
\[\int_{|x-x_0|=\varepsilon}\frac{dx}{1-x^{m}} \stackrel{x=zx_0}{=} x_0 \int_{|z-1|=\varepsilon}\frac{dz}{1-z^{m}}=-2\pi i\frac{x_0}{m},
\]
we have
\begin{equation}\label{eq:prod}
\prod_{k=1}^d\int_{|u_k-z_k|=\e}\frac{du_k}{1-u_k^{t_{j,k}}}=\frac{(-2\pi i)^d}{t_{j,1}\dots t_{j,d}}z_1\dots z_d.
\end{equation}

Given that $z_1^{t_{j,1}}=\dots=z_d^{t_{j,d}}=1$, to compute $R_j(\z)$ in this case we use Lemma~\ref{lem:iszero} and Lemma~\ref{lem:later}. 
We get $R_j(\z)=0$ unless $\z$ is a dual point of $L_j$. In the latter case, by
Lemma~\ref{lem:later} and \eqref{eq:numinS}, we have
\begin{equation}\label{eq:rj}
R_j(\z)=\frac{t_{j,1}\dots t_{j,d}}{\det \LL_j} \Chi_\z (\v_j).
\end{equation}
Substituting the $R_j$ from \eqref{eq:rj} and the integral from \eqref{eq:prod} into \eqref{eq:int} we conclude the proof.
\end{proof}

We now combine Lemma~\ref{lem:stupidintegral} with Lemma~\ref{lem:fund} to obtain our main technical result.

\begin{proposition}\label{prop:mainequality}
Let $\z \in \T^d$ be arbitrary. In a lattice tiling, we have
\begin{equation}~\label{eq:mainres}
\sum_{j\colon \z\in\WG{j}}\frac{\Chi_\z (\v_j)}{\det \LL_j}=\begin{cases} 1,&\textrm{if }\z=\mathbf{1}\\ 0 & \textrm{otherwise.}\end{cases}
\end{equation}
\end{proposition}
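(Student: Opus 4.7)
The plan is to integrate the fundamental identity from Lemma~\ref{lem:fund} around a small torus centered at $\z$ and then evaluate both sides using the residue computation already carried out in Lemma~\ref{lem:stupidintegral}. Concretely, for $\z$ a dual point of $\LL_1$ and $\e>0$ sufficiently small, I would integrate \eqref{eq:fund} over the torus $T=\{\u\in\C^d\colon |u_k-z_k|=\e\text{ for all }k\}$, which is exactly the identity \eqref{eq:integrals} already written down in the paper.

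For the left hand side, Lemma~\ref{lem:stupidintegral} says that the $j$-th integral vanishes unless $\z\in\WG{j}$, and in that case contributes $\frac{(-2\pi i)^d}{\det\LL_j}\Chi_\z(\v_j+\mathbf{1})$. Using $\Chi_\z(\v_j+\mathbf{1})=\Chi_\z(\v_j)\Chi_\z(\mathbf{1})$ and pulling out the common factor $\Chi_\z(\mathbf{1})=z_1\cdots z_d$, the left hand side simplifies to
\[
(-2\pi i)^d\,z_1\cdots z_d\sum_{j\colon\z\in\WG{j}}\frac{\Chi_\z(\v_j)}{\det\LL_j}.
\]

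For the right hand side of \eqref{eq:integrals}, I would observe that $\frac{1}{(1-u_1)\cdots(1-u_d)}$ is the generating function for the ``trivial'' tiling consisting of the single lattice $\Z^d$ (polar values all equal to $1$, determinant $1$, translate vector $\mathbf{0}$), so Lemma~\ref{lem:stupidintegral} applies once again and tells us that this integral vanishes unless $\z=\mathbf{1}$, in which case it equals $(-2\pi i)^d$. Alternatively one can verify this by a direct iterated application of Goursat's lemma to each one-variable factor.

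Finally I would divide both sides by the nonzero quantity $(-2\pi i)^d\,z_1\cdots z_d$ (nonzero because each $|z_k|=1$), noting that $z_1\cdots z_d=1$ precisely when $\z=\mathbf{1}$. This produces the stated identity \eqref{eq:mainres}. There is no real obstacle: the heavy lifting has already been done in Lemmas~\ref{lem:fund} and~\ref{lem:stupidintegral}, and the only point requiring a moment of care is tracking the auxiliary factor $\Chi_\z(\mathbf{1})$ and confirming that it cancels symmetrically from both sides.
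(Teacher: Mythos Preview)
Your proof is correct and follows the same route as the paper's own argument: integrate \eqref{eq:fund} over the torus $T$ to obtain \eqref{eq:integrals}, evaluate each term on the left using Lemma~\ref{lem:stupidintegral}, and compute the right hand side directly (the paper simply states the value, while you observe it is the $\LL=\Z^d$ instance of the same lemma). One small remark: the parenthetical ``$z_1\cdots z_d=1$ precisely when $\z=\mathbf{1}$'' is not literally true, but it is harmless here since all you actually use is that $z_1\cdots z_d=1$ in the case $\z=\mathbf{1}$ and that $z_1\cdots z_d\neq 0$ otherwise.
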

\begin{proof}
The case $\z = \mathbf{1}$ is Lemma~\ref{lem:volume}. If $\z \neq \mathbf{1}$,
consider \eqref{eq:integrals}. The integrals on the left-hand side can be
computed with
Lemma~\ref{lem:stupidintegral}. The integral on the right-hand side is $0$.
The proposition follows immediately.
\end{proof}

From Proposition~\ref{prop:mainequality} we can deduce an immediate corollary.
\begin{corollary}\label{cor:comeinpairs}
Given a lattice tiling, assume that $\z$ is a dual point of $\LL_i$ and $\z\neq\mathbf{1}$. Then there exists at least one other
 $\LL_j$ such that $\z$ is also a dual point of $\LL_j$.
\end{corollary}
\begin{proof}
If $\z$ belongs only to $\WG{i}$, then the left-hand side of \eqref{eq:mainres} is $\frac{\Chi_\z(\v_i)}{\det\LL_i}\neq 0$, and  we obtain a contradiction.
\end{proof}

\begin{proof}[Proof of Theorem~\ref{cycliccondition}]
Assume $\LL_1$ is cyclic and has largest determinant. Let $\z$ be an element in $\WG{1}$ of order $\det \LL_1$.
By Corollary~\ref{cor:comeinpairs} there 
exists $j>1$ such that $\z\in\WG{j}$. 
But then the whole group generated by $\z$ lies in $\WG{j}$, and hence $\WG{1}\subseteq\WG{j}$. By maximality of $\det \LL_1$ we have
$\WG{1}=\WG{j}$. Now Corollary~\ref{cor:subset} implies that $\LL_1=\LL_j$.
\end{proof}

\begin{example}\label{ex:nonstandard}
Consider the four sublattices $\LL_1=(2\Z\times 2\Z\times \Z)$, $\LL_2=(2\Z\times \Z\times 2\Z)$, $\LL_3=(\Z\times 2\Z\times 2\Z)$
and $\LL_4=(2\Z\times 2\Z\times 2\Z)\cup [(1,1,1)+(2\Z\times 2\Z\times 2\Z)]$.
It is known that $(1,0,0)+\LL_1$, $(0,0,1)+\LL_2$, $(0,1,0)+\LL_3$ and $\LL_4$ tile $\Z^3$. We have
\begin{multline*}
\frac{z_1}{(1-z_1^2)(1-z_2^2)(1-z_3)}+\frac{z_3}{(1-z_1^2)(1-z_2)(1-z_3^2)}+\\
+\frac{z_2}{(1-z_1)(1-z_2^2)(1-z_3^2)}+\frac{1+z_1z_2z_3}{(1-z_1^2)(1-z_2^2)(1-z_3^2)}=\\
=\frac{1}{(1-z_1)(1-z_2)(1-z_3)}.
\end{multline*}
The nontrivial dual points are
\begin{align*}
\WG{1}\colon &\ (1,-1,1),\ (-1,1,1),\ (-1,-1,1)\\
\WG{2}\colon &\ (1,1,-1),\ (-1,1,1),\ (-1,1,-1)\\
\WG{3}\colon &\ (1,1,-1),\ (1,-1,1),\ (1,-1,-1)\\
\WG{4}\colon &\ (1,-1,-1),\ (-1,1,-1),\ (-1,-1,1).
\end{align*}
We see that each dual point occurs precisely twice. This is a lattice tiling
in dimension $3$ without the translational property.
\end{example}

\begin{corollary}\label{cor:detequal}
Assume that the point $\z\neq\mathbf{1}$ is a dual point of the sublattices $\LL_i$ and $\LL_j$ and no other sublattices. Then $\det \LL_i=\det \LL_j$. 
\end{corollary}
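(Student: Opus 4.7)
The plan is to apply the character formula from Proposition~\ref{prop:mainequality} directly to the given point $\z$, and exploit that dual points lie on the unit torus so all characters have modulus one.

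First I would invoke Proposition~\ref{prop:mainequality} at the given point $\z \neq \mathbf{1}$. The right-hand side of \eqref{eq:mainres} is $0$, while the sum on the left ranges over all indices $k$ with $\z \in \WG{k}$. By hypothesis, the only such indices are $k = i$ and $k = j$, so the formula collapses to
\[
\frac{\Chi_\z(\v_i)}{\det \LL_i} + \frac{\Chi_\z(\v_j)}{\det \LL_j} = 0.
\]

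Next I would take absolute values. Recall from the introduction that every dual point has all coordinates of modulus $1$, so $\z \in \T^d$, and therefore $|\Chi_\z(\v_i)| = |\Chi_\z(\v_j)| = 1$ for any integer vectors $\v_i,\v_j$. Rearranging the displayed equation as $\det \LL_j \cdot \Chi_\z(\v_i) = -\det \LL_i \cdot \Chi_\z(\v_j)$ and taking $|\cdot|$ of both sides yields $\det \LL_j = \det \LL_i$, since the determinants are positive integers.

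There is no real obstacle here beyond invoking the correct form of the character formula; the only subtle point is to justify using unit modulus, which is immediate from the fact that $\z \in \WG{i} \subset \T^d$. The corollary is thus essentially an immediate two-line consequence of Proposition~\ref{prop:mainequality}, and once the formula is written out for this particular $\z$, the conclusion follows by taking absolute values.
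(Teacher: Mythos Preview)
Your proof is correct and follows essentially the same approach as the paper: apply \eqref{eq:mainres} at $\z$ to obtain the two-term relation, then use that the numerators have modulus one (the paper phrases this as ``the numerators are roots of unity'') to conclude equality of the positive integer denominators.
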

\begin{proof}
By \eqref{eq:mainres} we get
\[\frac{\Chi_\z(\v_i)}{\det \LL_i}+\frac{\Chi_\z(\v_j)}{\det \LL_j}=0.\]
But the numerators are roots of unity, so the denominators, being both
positive integers, must also agree.
\end{proof}

\subsection{Sufficiency of the residue condition}

We will show now that the conditions given by Proposition~\ref{prop:mainequality} are sufficient for the tiling. More precisely,
we have the following result.

\begin{theorem}\label{thm:sufficient}
Let $\v_1+\LL_1,\dots,\v_n+\LL_n$ be sublattice translates in $\Z^d$ with generating functions $\Theta_1,\dots,\Theta_n$. Suppose that
for any $\z\in\T^d$ we have the relation \eqref{eq:mainres}, then the
sublattice translates tile $\Z^d$.
\end{theorem}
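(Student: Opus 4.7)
The plan is to reduce to the generating function identity of Lemma~\ref{lem:fund}. Set
\[
F(\z)\ :=\ \sum_{i=1}^n\Theta_i(\z)\ -\ \frac{1}{(1-z_1)\cdots(1-z_d)};
\]
by the ``moreover'' part of Lemma~\ref{lem:fund}, the translates $\v_i+\LL_i$ tile $\Z^d$ if and only if $F\equiv 0$ as a rational function. So the task is to deduce $F\equiv 0$ from \eqref{eq:mainres}.

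Let $T:=\lcm\{t_{i,k}\colon 1\leq i\leq n,\ 1\leq k\leq d\}$ and set $P(\z):=\prod_{k=1}^d(1-z_k^T)\,F(\z)$. Using Proposition~\ref{prop:R-compute} to expand each $\Theta_i$,
\[
P(\z)=\sum_{i=1}^n R_i(\z)\prod_{k=1}^d\frac{1-z_k^T}{1-z_k^{t_{i,k}}}\ -\ \prod_{k=1}^d\frac{1-z_k^T}{1-z_k},
\]
where $R_i$ denotes the numerator attached to the translate $\v_i+\LL_i$ as in Lemma~\ref{lem:stupidintegral}. Each monomial of $R_i$ has $z_k$-degree strictly less than $t_{i,k}$, while $\frac{1-z_k^T}{1-z_k^{t_{i,k}}}=\sum_{j=0}^{T/t_{i,k}-1}z_k^{j\,t_{i,k}}$ has degree $T-t_{i,k}$, so $P$ is in fact a polynomial of degree at most $T-1$ in each variable $z_k$. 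A polynomial of degree $<T$ in each of $d$ variables vanishes identically as soon as it vanishes on the tensor grid $\mu_T^d$, where $\mu_T$ is the group of $T$-th roots of unity, so it suffices to show $P(\z)=0$ for every $\z\in\mu_T^d$.

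Fix such a $\z$. For each $i$ and each $k$, the factor $\frac{1-z_k^T}{1-z_k^{t_{i,k}}}$ equals $T/t_{i,k}$ when $z_k^{t_{i,k}}=1$ and $0$ otherwise, so the $i$-th summand in $P(\z)$ vanishes unless $z_k^{t_{i,k}}=1$ for every $k$. When that condition holds, Proposition~\ref{prop:iszero} together with Lemma~\ref{lem:later} give $R_i(\z)=0$ unless $\z\in\WG{i}$, and $R_i(\z)=\Chi_\z(\v_i)\cdot\tfrac{t_{i,1}\cdots t_{i,d}}{\det\LL_i}$ when $\z\in\WG{i}$; multiplying by $\prod_k T/t_{i,k}=T^d/\prod_k t_{i,k}$ yields a contribution of $\tfrac{T^d\Chi_\z(\v_i)}{\det\LL_i}$. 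The second term $\prod_k\frac{1-z_k^T}{1-z_k}$ evaluates to $T^d$ at $\z=\mathbf{1}$ and to $0$ elsewhere on $\mu_T^d$. Collecting,
\[
P(\z)\ =\ T^d\sum_{j\colon\z\in\WG{j}}\frac{\Chi_\z(\v_j)}{\det\LL_j}\ -\ \begin{cases} T^d & \textrm{if }\z=\mathbf{1}\\ 0 & \textrm{otherwise,}\end{cases}
\]
which vanishes by the hypothesis \eqref{eq:mainres}. Hence $P\equiv 0$, hence $F\equiv 0$, and Lemma~\ref{lem:fund} delivers the tiling.

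This argument is essentially the residue computation of Lemma~\ref{lem:stupidintegral} run in reverse, with the torus integrals around each candidate point replaced by direct evaluations on the finite grid $\mu_T^d$. The one step requiring care is the case analysis at grid points where some $z_k^{t_{i,k}}=1$ but $\z\notin\WG{i}$, but this is exactly what Proposition~\ref{prop:iszero} is designed to handle, so I do not anticipate any further obstacle beyond packaging the computation.
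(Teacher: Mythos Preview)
Your proof is correct and follows essentially the same approach as the paper's: clear denominators, bound the degree of the resulting polynomial in each variable, use the hypothesis \eqref{eq:mainres} to show it vanishes on a full tensor grid of roots of unity, and conclude via Lemma~\ref{lem:fund}. The only cosmetic differences are that you use a single global $T=\lcm\{t_{i,k}\}$ whereas the paper uses per-coordinate least common multiples $Q_k$, and you invoke the tensor-grid interpolation fact directly whereas the paper spells out the induction over coordinates.
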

\begin{remark}
If $\z$ is not a dual point of any of the sublattices, then \eqref{eq:mainres} is an empty relation. Therefore it is enough to check \eqref{eq:mainres}
for finitely many cases.
\end{remark}
\begin{proof}
We would like to show that $\Theta_1,\dots,\Theta_n$ satisfy \eqref{eq:fund}.
In the following, for a polynomial $P$ in variables $z_1,\dots,z_d$, we write $\deg_k P$ as the degree in variable $z_k$.

Observe that each generating function $\Theta_j$ vanishes at infinity. More precisely, if we fix $z_1,\dots,\widehat{z}_k,\dots,z_d$
such that $z_m^{t_{j,m}}\neq 1$ for any $m\neq k$ (where $t_{j,m}$ denotes
the $m$-th polar value of $\LL_j$), then we have $\lim_{z_k\to\infty}\Theta_j(z_1,\dots,z_d)=0$.
This is a direct consequence of the fact that $\deg_kR_j<t_{j,k}$. In particular if we define
\[\Theta(\z)=\Theta_1(\z)+\dots+\Theta_n(\z)-\frac{1}{(1-z_1)\dots(1-z_d)},\]
then $\Theta$ also vanishes at infinity. We can write $\Theta$ in the following way.
\[\Theta(\z)=\frac{R(\z)}{Q_1(z_1)\dots Q_d(z_d)},\]
where $R$ is a polynomial in $z_1,\ldots,z_d$ and $Q_m$ is the least common
multiple of $1 - z_m^{t_{1,m}},\dots,1 - z_m^{t_{n,m}}$. 
Notice that each $Q_m$ is square free.
The asymptotics
of $\Theta$ implies that
\begin{equation}\label{eq:asymptotics}
\deg_kR<\deg Q_k \quad \text{for } 1 \le k \le d.
\end{equation}

We first claim that if $u_1,\dots,u_d$ are such that
$Q_1(u_1)=\dots=Q_d(u_d)=0$, then we have $R(u_1,\dots,u_d)=0$. Indeed, if $\u=(u_1,\dots,u_d)$ is not a dual
point of any $\LL_j$, then the residue of any $\Theta_j(\z)d\z$ at $\u$ is
zero (see proof of Lemma~\ref{lem:stupidintegral}). If it is a dual point of some
sublattices, then the second case of \eqref{eq:mainres} applies.
Therefore, $R(u_1,\dots,u_d)$ is proportional to the total residue of the form
$\Theta(\z)d\z$ at $u_1,\dots,u_d$, which is zero.

Now by induction we shall show that for any $k$, and any $u_{k+1},\dots,u_d$ satisfying $Q_{k+1}(u_{k+1})=\dots=Q_{d}(u_d)=0$, we have
\[R(z_1,\dots,z_k,u_{k+1},\dots,u_d)\equiv 0\textrm{\ \ as a polynomial in $z_1,\dots,z_k$}.\]
The induction assumption (for $k=0$) is done. Now suppose we have proved it
for $k-1$. Thus, the polynomial
\[P_k(z_1,\dots,z_{k-1},z_k):=R(z_1,\dots,z_k,u_{k+1},\dots,u_d)\]
vanishes at any $z_k=u_k$ that satisfy $Q_k(u_k)=Q_{k+1}(u_{k+1}) = \dots =
Q_d(u_d) = 0$. Hence, $P_k$ is divisible by $(z_k-w_1)\ldots(z_k-w_l)$,
where $w_1,\ldots,w_l$ are roots of $Q_k$. Since $Q_k$ is square free, we have
$l=\deg Q_k$. But $\deg_k P_k<\deg Q_k$. The only possibility is that
$P_k\equiv 0$. The induction step is done.

The statement for $k=d$ implies that $R$ is identically zero. This is equivalent to \eqref{eq:fund}, and the proof is finished.
\end{proof}
\begin{remark}
The above proof is a generalization of the fact that if a rational function on $\C$ has only simple poles, vanishes at infinity and has residue $0$ at
each pole, then it is equal to zero everywhere. One could express the above proof in the language of multidimensional residues, but we wanted the proofs to be accessible to non-experts.
\end{remark}

\bigskip

\section{The translational property}\label{sec:translational}

\subsection{The translational property in $\Z^d$}
A lattice tiling has the translational property if at least two of its
translated sublattices are different cosets of the same sublattice.

The translational property is known in $\Z^1$ (see
Section~\ref{sec:background}). In fact, for
any expression of $\Z^1$ as a nontrivial \emph{rational linear
  combination} of arithmetic progressions, there are two progressions having same difference.

In \cite{Tiling1}, a counterexample to the translational property in $\Z^3$
was given. 
We reproduced this in Example~\ref{ex:nonstandard}.
This construction directly generalizes to higher dimensions.
We now focus on the translational property in lattice tilings of $\Z^2$. 
But first, we want to point out a counterexample for Question~\ref{question3} in
a more general setting, in which sublattice translates carry rational weights and
are allowed to overlap.

\begin{example}
Consider $\LL_1 = \text{span} ((2,0), (0,1)) $, $\LL_2 = \text{span} ((1,0),
(0,2)), \LL_3 = \text{span} ( (2,0)$, $(0,2) )$, $\LL_4 = \text{span} ( (1,1),
(1,-1) )$. These sublattices of $\Z^2$ are obviously distinct. However, we have a tiling:
\[
\Z^2 = \frac{1}{2}(\LL_1 + (0,0)) + \frac{1}{2}(\LL_2 + (0,0)) + 1(\LL_3 +
(1,1)) + \frac{1}{2}(\LL_4 + (1,0)). 
\]

Here the rational coefficient in front of each translate $(\LL_i + \v_i)$ tells us
how many ``times'' a point in that translate is counted.
\end{example}

This implies that relaxing the disjointness condition will destroy the
combinatorial rigidity of lattice tilings. 
Before investigating the  translational property in $\Z^2$, we look at some
properties of 2-dimensional dual groups.

\subsection{Characterisation of dual groups of sublattices in $\Z^2$}
It is easy to show that a 2-dimensional sublattice $\LL$ generated by $v_1=(a,b)$ and $v_2=(c,d)$ is cyclic if and only if $\gcd(a,b,c,d)=1$. The quantity $e=\gcd(a,b,c,d)$ does not depend on the choice of basis and we will call it the \emph{multiplicity} of $\LL$. 

It follows from the definition of Smith Normal Form  
that we have an isomorphism, namely
$\WGL \cong \Z_e \oplus \Z_\frac{\det\LL}{e}$ and furthermore
$e^2|\det\LL$. In particular, if $\det\LL$ is square free, the sublattice $\LL$ is necessarily cyclic.
In dimensions higher than $2$, cyclicity is more subtle and the group $\WGL$ might be more complicated.

To stress the difference between the 2-dimensional case and the higher dimensional ones, we first prove a simple result.

\begin{lemma}\label{lem:onlytwo}
Let $\LL_1$ and $\LL_2$ be two sublattices of $\Z^2$ with equal multiplicities $e_1=e_2=e$. 
Assume that there is a common vector $w=(a,b)\in \LL_1\cap \LL_2$ such that $\gcd(a,b)=e$.
Assume also that $\det \LL_1=\det \LL_2$, then we have $\LL_1=\LL_2$.
\end{lemma}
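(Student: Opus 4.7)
The plan is to reduce to the case where the multiplicity $e$ equals $1$ (i.e.\ both lattices are cyclic) and the shared vector $w$ is primitive, and then to show directly that a sublattice of $\Z^2$ containing a given primitive vector is determined by its determinant.

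First I would normalize by $e$. Since $\LL_i$ has multiplicity $e$, every element of $\LL_i$ has both coordinates divisible by $e$, so $\LL_i \subset e\Z^2$. Define
\[
\LL_i' := \tfrac{1}{e}\LL_i = \{v/e \colon v\in \LL_i\}\subset \Z^2.
\]
The map $v\mapsto v/e$ induces an isomorphism $\Z^2/\LL_i' \cong e\Z^2/\LL_i$, so $\det \LL_i' = \det \LL_i / e^2$; in particular $\det \LL_1' = \det \LL_2'$. The shared vector $w'=w/e=(a/e,b/e)$ satisfies $\gcd(a/e,b/e)=1$ by hypothesis, so $w'$ is a primitive element of $\Z^2$ lying in $\LL_1'\cap \LL_2'$. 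Showing $\LL_1'=\LL_2'$ will immediately give $\LL_1=\LL_2$, so the rest of the argument may assume $e=1$ and $w$ primitive.

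Second, since $w$ is primitive, I can extend $\{w\}$ to a $\Z$-basis $\{w,u\}$ of $\Z^2$. Applying the unimodular change of basis taking $(w,u)$ to $(e_1,e_2)$, both $\LL_1$ and $\LL_2$ become sublattices of $\Z^2$ that contain $e_1=(1,0)$ and still have equal determinants; call this common determinant $k$.

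Third, I claim that any sublattice $\LL\subset \Z^2$ containing $e_1$ with $\det \LL=k$ equals $\langle e_1, k e_2\rangle$; this immediately yields $\LL_1=\LL_2$. To prove the claim, let $m>0$ be the smallest positive integer such that some vector $\alpha e_1 + m e_2$ lies in $\LL$ (such $m$ exists because $\LL$ has finite index in $\Z^2$); subtracting $\alpha e_1\in \LL$ gives $m e_2\in \LL$, so $\langle e_1, m e_2\rangle\subset \LL$. Conversely, for any $(x,y)\in \LL$, minimality of $m$ together with the Euclidean algorithm in the second coordinate forces $m\mid y$, and then subtracting a multiple of $m e_2$ followed by a multiple of $e_1$ shows $(x,y)\in \langle e_1, m e_2\rangle$. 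Hence $\LL=\langle e_1, m e_2\rangle$, whose determinant is $m$; thus $m=k$ and the claim follows.

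There is no real obstacle here; the only place to be careful is the opening reduction, where one must verify that the scaling $\LL_i\mapsto \LL_i/e$ preserves the relevant data (sublattice of $\Z^2$, primitivity of $w/e$, and equality of determinants $\det \LL_i'=\det \LL_i/e^2$) so that the final uniqueness step may be applied. Note that the argument actually only uses the common primitive vector and the equality of determinants; the cyclic (multiplicity one) hypothesis enters only implicitly through the scaling step, which is exactly what distinguishes dimension $2$ from higher dimensions, where—as the remark preceding the lemma emphasizes—the structure of $\WGL$ can be strictly more complicated.
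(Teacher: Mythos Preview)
Your argument is correct, but it proceeds differently from the paper's after the common reduction step. Both proofs begin by dividing through by $e$ to reduce to the primitive case $\gcd(a,b)=1$. From there, the paper argues directly with determinants: it extends $w=(a,b)$ to bases $(w,\v_i)$ of $\LL_i$, observes that the equality $ad_1-bc_1=ad_2-bc_2$ together with $\gcd(a,b)=1$ forces $\v_2-\v_1=kw$ for some integer $k$, and concludes $\v_2\in\LL_1$, hence $\LL_2\subset\LL_1$ (and symmetrically the reverse). You instead pass through a unimodular change of coordinates sending $w$ to $(1,0)$ and then invoke a Hermite-normal-form style fact: any full-rank sublattice of $\Z^2$ containing $(1,0)$ is exactly $\langle (1,0),(0,m)\rangle$ for the unique $m$ equal to its index. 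Your route is a bit longer but makes the uniqueness statement completely explicit; the paper's route is shorter and avoids the auxiliary change of basis at the cost of being slightly more ad hoc. One small quibble: the remark you cite about higher-dimensional behaviour actually \emph{follows} the lemma in the paper rather than preceding it.
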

\begin{proof}
Rescaling the two sublattices by the factor $1/e$ we can assume that $e=1$. 
Let $\w=(a,b)$ and $\v_1=(c_1,d_1)$, $\v_2=(c_2,d_2)$ be two vectors such that $(\w,\v_i)$ spans $\LL_i$ and $ad_i-bc_i=\det \LL_i$, $i=1,2$. The equality
of determinants implies that
\[a(d_1-d_2)=b(c_1-c_2).\]
As $\gcd(a,b)=1$, we infer that there exists $k\in\Z$ such that $c_1-c_2=ka$, $d_1-d_2=kb$. This means that $\v_2=\v_1+k\w$.
In particular $\v_2\in\LL_1$, so $\LL_2\subseteq\LL_1$. Similarly $\LL_1
\subseteq \LL_2$ and we are done.
\end{proof}

\begin{remark}
The proof does not work if we do not assume that $\gcd(a,b)=e$. For example
consider $\LL_1=2\Z\times \Z$ and $\LL_2=\Z\times 2\Z$. Then $e=1$, $(2,2)\in \LL_1\cap \LL_2$
and $\det \LL_1=\det \LL_2=2$, but $\LL_1$ and $\LL_2$ are different. In the language of dual points and dual groups we can reformulate Lemma~\ref{lem:onlytwo} as follows.
\end{remark}

\begin{corollary}\label{cor:ident-two}
Given two sublattices $\LL_1$ and $\LL_2$ with the same multiplicity $e$ and determinant $\Delta$. Assume  that there exists $g_1\in \WG{1}$
and $g_2\cap\WG{2}$, both of order $\Delta/e$, such that $g_1^e=g_2^e$. Then $\LL_1=\LL_2$.
\end{corollary}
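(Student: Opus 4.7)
The plan is to reduce to the multiplicity-one case by rescaling, and then to exhibit a common cyclic generator of the two resulting dual groups. Since $\LL_i$ has multiplicity $e$, every basis vector lies in $e\Z^2$, so we may write $\LL_i = e\tilde{\LL}_i$ where $\tilde{\LL}_i \subset \Z^2$ has multiplicity $1$ and determinant $\Delta/e^2$ (an integer, by the remark that $e^2 \mid \Delta$ at the start of the section). It therefore suffices to prove $\tilde{\LL}_1 = \tilde{\LL}_2$, as $\LL_i = e \tilde{\LL}_i$ then gives the desired conclusion.

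The heart of the argument is to recognize that $\tilde g := g_1^e = g_2^e$, viewed as a point of $\T^2$, is a common generator of both $\widehat{G}_{\tilde{\LL}_1}$ and $\widehat{G}_{\tilde{\LL}_2}$. For membership, given $w \in \tilde{\LL}_i$ we have $ew \in \LL_i$, whence
\[
\Chi_{\tilde g}(w) = \Chi_{g_i}(w)^e = \Chi_{g_i}(ew) = 1,
\]
so $\tilde g \in \widehat{G}_{\tilde{\LL}_i}$. For the generating claim, the order of $g_i^e$ in $\T^2$ is $(\Delta/e)/\gcd(\Delta/e, e)$, and since $e^2 \mid \Delta$ implies $e \mid \Delta/e$ and hence $\gcd(\Delta/e, e) = e$, this simplifies to $\Delta/e^2$. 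This equals $|\widehat{G}_{\tilde{\LL}_i}| = \det\tilde{\LL}_i = \Delta/e^2$, forcing the cyclic subgroup $\langle\tilde g\rangle$ to fill up the entire dual group.

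Consequently $\widehat{G}_{\tilde{\LL}_1} = \langle \tilde g \rangle = \widehat{G}_{\tilde{\LL}_2}$ as subgroups of $\T^2$, and Corollary~\ref{cor:subset} then yields $\tilde{\LL}_1 = \tilde{\LL}_2$, hence $\LL_1 = \LL_2$. The delicate ingredient is the order computation: it uses the dimension-$2$ fact $e^2 \mid \Delta$ essentially, without which $g_i^e$ would not automatically have order $\det\tilde{\LL}_i$ and the clean reduction to cyclic equality would fail. Alternatively, one could aim to verify the hypothesis of Lemma~\ref{lem:onlytwo} directly, by producing a vector in $\LL_1 \cap \LL_2$ whose coordinates have $\gcd$ equal to $e$; but going through $\widehat{G}_{\tilde{\LL}_i}$ and invoking Corollary~\ref{cor:subset} avoids the need to exhibit such a vector explicitly.
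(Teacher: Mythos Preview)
Your proof is correct and follows essentially the same route as the paper: rescale to $\frac{1}{e}\LL_i$, observe that $g_1^e=g_2^e$ is a common dual point of order $\Delta/e^2$ and hence generates both dual groups, then invoke Corollary~\ref{cor:subset}. You have actually supplied more detail than the paper does, in particular the explicit justification that the order of $g_i^e$ is $(\Delta/e)/\gcd(\Delta/e,e)=\Delta/e^2$ via $e^2\mid\Delta$.
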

\begin{proof}
Let $g=g_1^e=g_2^e=(z_1,z_2)\in \T^2$. Consider the two integer sublattices
$\frac{1}{e}\LL_1$ and $\frac{1}{e}\LL_2$. They both are cyclic with determinant 
$\Delta/e^2$ and admit $g$ as a common dual point. Now $g$ also has order $\Delta/e^2$, hence it is a generator 
for both $\widehat{G}_{\frac{1}{e}\mathcal{L}_1}$ and $\widehat{G}_{\frac{1}{e}\mathcal{L}_2}$. So we must have $\widehat{G}_{\frac{1}{e}\mathcal{L}_1}=\widehat{G}_{\frac{1}{e}\mathcal{L}_2}$. The result follows easily by applying Corollary \ref{cor:subset} and then rescaling to the original sublattices.
\end{proof}

\subsection{The translational property in $\Z^2$}\label{tilings2}
In this subsection, we prove some sufficient conditions for the translational
property in $\Z^2$ to hold.
Assume that we are given sublattice translates $\v_1+\LL_1, \dots, \v_n+\LL_n$
that together tile $\Z^2$.
Let us reorder the sublattices in the following way.
\begin{itemize}
\item[(a)] If $i<j$, then $\frac{1}{e_i}\det \LL_i\ge \frac{1}{e_j}\det \LL_j$. In other words, the maximal cyclic subgroup of $\WG{i}$ has at least the same
order as the maximal cyclic subgroup of $\WG{j}$.
\item[(b)] If $i<j$, but $\frac{1}{e_i}\det \LL_i=\frac{1}{e_j}\det \LL_j$,
  then $e_i > e_j$.
\end{itemize}

\begin{theorem}\label{thm:main-two}
If the number $e_1$ is of the form $p^r$ for $p$ a prime, then the tiling has the translation property.
\end{theorem}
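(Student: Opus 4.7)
The plan is to refine the strategy of Theorem~\ref{generaldimension}: pick a maximal-order dual point $\z \in \WG{1}$, use Corollary~\ref{prop:comeinpairs} together with the residue identity~\eqref{eq:mainres} to force another lattice $\LL_j$ to share $\z$ as a dual point, and then invoke Corollary~\ref{cor:ident-two} with $g_1=g_2=\z$ to conclude $\LL_1=\LL_j$. The novelty over Theorem~\ref{generaldimension} is that $\WG{1}\cong\Z_{p^r}\oplus\Z_M$ is no longer cyclic when $r\geq 1$, so matching the multiplicities $e_1$ and $e_j$ will require additional work; this is where the prime-power hypothesis $e_1=p^r$ will enter, through a $p$-adic analysis of \eqref{eq:mainres}.

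Setting $M:=\det\LL_1/e_1$, I would first pick $\z\in\WG{1}$ of exact order $M$; such an element exists by the structure of $\WG{1}$. In the nontrivial case $M\geq 2$ we have $\z\neq\mathbf{1}$, so Corollary~\ref{prop:comeinpairs} supplies at least one index $j\in J\setminus\{1\}$, where $J:=\{j:\z\in\WG{j}\}$. Since the exponent of $\WG{j}$ equals $\det\LL_j/e_j$, the existence in $\WG{j}$ of an element of order $M$ together with ordering~(a) forces $\det\LL_j/e_j=M$ for every $j\in J$, and ordering~(b) then pins down $e_j\leq e_1=p^r$ for every $j\in J\setminus\{1\}$.

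The main obstacle is to upgrade $e_j\leq p^r$ to the equality $e_j=p^r$ for some $j\in J\setminus\{1\}$, since this is what Corollary~\ref{cor:ident-two} requires. Write $M=p^s m$ with $\gcd(p,m)=1$, and for $j\in J$ write $e_j=p^{r_j}m_j$ with $m_j\mid m$ and (using $e_j\leq p^r$) $r_j\leq r$. Cancelling the factor $1/M$ in \eqref{eq:mainres} and then multiplying through by $p^r m$ yields the identity
\[
m\,\Chi_\z(\v_1)\;+\;\sum_{j\in J\setminus\{1\}} p^{\,r-r_j}\cdot\frac{m}{m_j}\,\Chi_\z(\v_j)\;=\;0
\]
in the ring $\Z[\zeta_M]$. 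If every $j\in J\setminus\{1\}$ had $r_j<r$, each tail term would be divisible by $p$; since $\Chi_\z(\v_1)$ is a root of unity (hence a unit in $\Z[\zeta_M]$), this would force $p\mid m$ in $\Z[\zeta_M]\cap\Q=\Z$, contradicting $\gcd(p,m)=1$. Therefore some $j\in J\setminus\{1\}$ must satisfy $r_j=r$; combined with $e_j\leq p^r$ this forces $m_j=1$, hence $e_j=p^r=e_1$ and $\det\LL_j=\det\LL_1$.

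With matching multiplicities and determinants in hand, I would finish by applying Corollary~\ref{cor:ident-two} to $\LL_1$ and $\LL_j$ with $g_1=g_2=\z$: both elements lie in the respective dual groups, both have order $M=\det\LL_j/e_j$, and the relation $g_1^{e_1}=g_2^{e_j}$ is automatic. This yields $\LL_1=\LL_j$, delivering the translation property.
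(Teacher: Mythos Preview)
Your argument is correct and follows essentially the same route as the paper: pick a dual point $\z\in\WG{1}$ of maximal order $M=\det\LL_1/e_1$, use the ordering to force $\det\LL_j/e_j=M$ and $e_j\le p^r$ for every $j$ with $\z\in\WG{j}$, run a mod-$p$ analysis of \eqref{eq:mainres} to produce some $j\neq 1$ with $e_j=p^r=e_1$, and conclude $\LL_1=\LL_j$ via Corollary~\ref{cor:ident-two}. The only cosmetic difference is in how the mod-$p$ step is executed---the paper divides by the monic minimal polynomial of a root of unity and reduces in $\mathbb{F}_p[x]$, whereas you invoke $\Z[\zeta_M]\cap\Q=\Z$---but the content is identical.
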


\begin{proof}
We set $\alpha=\det\LL_1/e_1$.  Let $\z\in\WG{1}$ be an element of order $\alpha$.
We define a sequence $1=n_{1}<n_{2}<\dots<n_{s}$ of indices such that $\z$ belongs to $\WG{n_{1}},\dots,\WG{n_{s}}$ and to no other
lattice. This sequence is nonempty by Corollary~\ref{cor:comeinpairs}. To shorten the notation we will write $\LL_k^\z$, $\v_k^\z$ instead of $\LL_{n_k}$, $\v_{n_k}$.

The maximum order over all elements in $\WGR_{\LL_k^\z}$ is $\det\LL_k^\z/e_k^\z$. By the ordering condition we have $\det\LL_k^\z/e_k^\z\le\alpha$.
But $\WGR_{\LL_k^\z}$ contains the element $\z$ of order $\alpha$. Hence
$\det\LL_k^\z/e_k^\z=\alpha$. Let us now apply now Proposition~\ref{prop:mainequality} to get the following equation
\begin{equation}\label{eq:cor}
\sum_{k=1}^{s}\frac{\Chi_{\z}(\v^\z_k)}{\alpha e^\z_k}=0,
\end{equation}
where we wrote $\det\LL_k^\z=\alpha e^\z_k$. Each term $\Chi_{\z}(\v^\z_k)$
is a root of unity. Define
\[a_k=\Chi_{\z}(\v^\z_1)^{-1}\Chi_{\z}(\v^\z_k).\]
Equation \eqref{eq:cor} now takes the following form:
\begin{equation}\label{eq:finalform}
\sum_{k=1}^{s}\frac{a_{k}}{e^\z_k}=0.
\end{equation}

Denote by $g$ the minimal positive integer such that $a_{k}^g=1$ for all
$k=1,\dots,s$. Next we need two lemmas.

\begin{lemma}\label{lem:twoaredifferent}
If there are  $k \neq l$ such that $e_k^\z=e_l^\z$, then the translational
property holds.
\end{lemma}
\begin{proof}[Proof of Lemma~\ref{lem:twoaredifferent}]
The sublattices $\LL_k^\z$ and $\LL_l^\z$ have the same determinant and multiplicity, and share an
element $\z$ of order equal to the order of each sublattice. By
Corollary~\ref{cor:ident-two} we obtain that $\LL_k^\z=\LL_l^\z$. So the
translational property holds.
\end{proof}

\begin{lemma}\label{lem:str}
Suppose that there exists a prime $q$ an integer $l>0$, and an index $k\in\{1,\dots,s\}$ such that $q^l|e^\z_k$.
Then there exists $k'\in\{1,\dots,s\}$, $k'\neq k$ such that $q^l|e^\z_{k'}$.
\end{lemma}
\begin{proof}[Proof of Lemma~\ref{lem:str}]
Assume the contrary, so that $k$ is the unique index for which $q^l|e^{\z}_k$.
Let $B$ be the least common multiple of $e^{\z}_1,\dots,e^\z_{s}$ and $B_k=B/e^{\z}_{k}$.
By the above assumption, $q\not|B_k$ and for any $n\neq k$ we have $q|B_n$.

Equation \eqref{eq:finalform} can be now be rewritten as
\begin{equation}\label{eq:ultimateform}
B_k + \sum_{n\neq k}B_n \frac{a_n}{a_k} = B_k+\sum_{n\neq k}B_m\e^{\gamma_n}=0,
\end{equation}
where $\e$ is a root of unity of order $g$, and $\gamma_n\in\{0,\dots,g-1\}$.
The above expression is a polynomial in $\e$.
Let us denote this polynomial by $P(\e)$.  By the assumption on $B_1,\ldots,B_s$, we have.
\[P(\e)=B_k+qQ(\e),\]
where $Q$ is a polynomial with integer coefficients. 

Now let $H$ be the minimal integer polynomial for a $g$-th root of
unity. This is a monic, symmetric polynomial.
Since $P(\e)=0$, $H$ divides $P$. Since $H$ is monic, the quotient $R=P/H\in\Z[x]$ has integer coefficients.
We end up with the following relation in the ring $\Z[x]$:
\begin{equation}\label{eq:Bj}
B_k+qQ(x)=R(x)H(x).
\end{equation}
Let us now reduce this equation modulo $q$. We get $B_k\bmod q=(R\bmod q)(H\bmod q)$, where $(H\bmod q)$ has positive degree (because $H$
is monic) and 
$B_k\not\equiv 0\bmod q$.
This cannot hold, for either $\deg(R\bmod q)\geq 0$ and the right-hand side
has positive degree, or $R\equiv 0\bmod q$ so the left-hand side must be zero.
\end{proof}

\begin{remark}\label{rem:str}
We point out that Lemma~\ref{lem:str} works without any assumption on the
translational property. It is a direct consequence of 
Proposition~\ref{prop:mainequality}, that is of the tiling condition. The result is valid also in higher dimensions, if we define $e$ as the quotient
of the determinant over the order.
\end{remark}

\smallskip
\noindent\emph{Conclusion of the proof of Theorem~\ref{thm:main-two}}

We apply now Lemma~\ref{lem:str} to $q^l=p^r=e_1^\z=e_1$. We find another index $k>1$ such that $p^r|e^\z_k$. But $e^\z_k\leq e_1=p^r$ by the
ordering, so we must have $e^\z_k=e_1$. Now Lemma~\ref{lem:twoaredifferent}
ensures the translational property.

\end{proof}

\bigskip
\section{Open questions}\label{sec:open}
We end with some open questions which arise naturally from the results above.  

\begin{problem}\label{problemifandonlyiftranslationtiling}
For dimensions $d \geq 2$, give a necessary and sufficient condition,  in terms of the arithmetic of the sublattice translates, for a lattice tiling to possess the translation property.
\end{problem}

\smallskip
We call a lattice tiling \emph{primitive} if it is not a split tiling. 
In other words, a primitive lattice tiling is a tiling that cannot be formed by splitting another tiling which has a smaller number of sublattice translates.

\begin{problem}\label{primitivetilings}
What are the divisibility relations between the determinants of the
sublattices in a primitive tiling?
\end{problem}

\medskip
And, of course, perhaps the most surprising state of affairs is that Question \ref{question2} from the introduction, concerning two dimensional lattice tilings,  remains open.

\end{document}